\theoremstyle{thm}
\newtheorem{thm}{Theorem}[section]
\newtheorem{cor}[thm]{Corollary}
\theoremstyle{lem}
\newtheorem{lem}[thm]{Lemma}
\newtheorem{prop}[thm]{Proposition}
\newtheorem{defn}[thm]{Definition}
\theoremstyle{rem}
\newtheorem{rem}[thm]{Remark}
\newtheorem{exe}[thm]{Example}
\newcommand{\wt}{\widetilde}
\newcommand{\Hh}{\mathcal{H}}
\newcommand{\N}{\mathbb{N}}
\newcommand{\af}{\alpha}
\newcommand{\ds}{\displaystyle}
\newcommand{\bt}{\beta}
\newcommand{\gpar}{\hat{G}_{\rm par}}
\newcommand{\om}{\omega}
\newcommand{\ot}{\otimes}
\newcommand{\Om}{\Omega}
\newcommand{\m}{^{-1}}
\newcommand{\de}{\delta}
\title{\textbf{Partial actions and cyclic Kummer's theory }}
\author{V\'{\i}ctor Mar\'{\i}n$^{\text{1}}$\\
   \small $^{\text{1}}$Departamento de Matem\'{a}ticas y Estad\'{i}stica \\
   \small Universidad del Tolima\\
   \small Santa Helena, Ibagu\'{e}, Colombia\\
   \small e-mail: vemarinc@ut.edu.co\\
  Andr\'es Ca\~nas and  H\'{e}ctor Pinedo $^{\text{2}}$\\
   \small $^{\text{2}}$ Escuela de Matem\'{a}ticas\\
   \small Universidad Industrial de Santander\\
   \small Cra. 27 calle 9, Bucaramanga, Colombia\\
   \small  e-mail: andress090.0@gmail.com , hpinedot@uis.edu.co}
   \date{\today}
\begin{document}
\maketitle
\begin{abstract}
\noindent
We introduce a theory of Cyclic Kummer extensions of commutative rings for partial Galois extensions of finite groups, extending some of the well-known results of the theory of Kummer extensions of commutative rings developed by A. Z. Borevich. In particular, we provide necessary and sufficient conditions a to determine when a partial $n$-kummerian extension is equivalent to either a radical or a $I$-radical extension, for some subgroup $I$ of the cyclic group $C_n$.
\end{abstract}

\noindent
\textbf{2010 AMS Subject Classification:} Primary 13B05. Secondary 13A50, 16W22.\\
\noindent
\textbf{Key Words:} Partial Kummer extensions,  Galois extensions,  cocycles, coboundary.

\section{Introduction}
The theory of Kummer extensions of commutative rings was introduced by A. Z. Borevich in \cite{B} and it is proved that every Kummer extension of a ring $R$ with group $G$ has a decomposition into a  direct sum of  $R$-submodules, which are image of homomorphisms defined in terms of characters of $G$. Further, the author introduced the term  of radical extension and shows that every cyclic Kummer extension is equivalent to them.  Explicitely, 
it is proved in \cite[Theorem 2, section 8]{B}  that every cyclic extension $T$ of a  Kummerian ring $R$ with Galois group $G$ is $G$-equivalent to the radical extension $S_{Q, \varphi, \chi}$, for some $R$-module  $Q$ of rank one.  
%

On the other hand  the theory of  partial Galois extensions of  commutative rings was introduced and studied in \cite{DFP}, extending some of the well known results given in  celebrated paper by Chase, Harrison and Rosenberg in \cite{CHR}. In particular,  given  a partial Galois extension $R\subseteq S,$ in   \cite[Theorem 5.1]{DFP} the authors  established a one to one correspondence between the subgroups of $G$ and the separable $R$-subalgebras $T$ of $S$ which are $\alpha$-strong and such that $H_T$, the subgroup of $G$ such that the elements of $T$ stay fixed by the partial action $\alpha$, is a subgroup of $G$.  In \cite{BCMP}  the authors complete  \cite[Theorem 5.1]{DFP} by showing that  for any normal subgroup  $H$  of $G$  the subring $S^{\alpha_H}$ of $S$ is a partial Galois extension of $R$ with Galois group $G/H.$   In the case that $G$ is abelian, this  leads to the construction  of   the inverse semigroup of equivalence classes of partial Galois abelian extensions of $R$ with same group $G$, called the Harrison inverse semigroup and denoted by $\mathcal{H}_{\rm par}(G, R)$, this semigroup   contains the Harrison group defined in  \cite{H}. Moreover, as in the classical case, the study of $\mathcal{H}_{\rm par}(G, R)$ is  reduced to the cyclic case.

Starting from the partial Galois theory for abelian groups, it is possible to get a partial cyclic Kummer theory. For this, our principal goals in this work is to generalize the results \cite[section 2]{B}, \cite[Theorem 1, section 3]{B} and \cite[Theorem 2, section 8]{B} to the partial context.  Thus we connect invertible modules with one dimensional partial cocycles;  our new ingredients are include  satured set and   $I$-radical extension, which can be seen as subalgebras of Borevich's radical extensions.  

The paper is organized as follows. After of the introduction, in Section 2 we present some preliminaries facts  on partial actions and partial  Galois  cohomology of groups. In section 3 we connect invertible modules with one dimensional partial cocycles (see Section \ref{coho}). 
In section 4 we present a partial Kummer theory. In the first part we show that determine  a partial  $n$-kummerian extension is a sum  of invertible  modules described in section \ref{IMCC}, where the sum cover all the characters of $G$. In second part, given $m\in \N$ and $I\subseteq \{0,\dots, m-1\}$ we introduce the notion of  Borevich's $I$-radical extension and in Proposition \ref{iradd} we give necessary and sufficient conditions to determine when this extensions have an algebra structure. In  the third part we determine which partial cyclic Kummer extensions can be parametrized by $I$-radical extensions, and show that the study of this kind of partial action can be reduced to the global case. 
 
Throughout this work the word ring means an associative ring with an identity element. For a commutative  ring $R$ we say that an $R$-module is f.g.p if it is finitely generated and projective, faithfully projective    if it is faithful and f.g.p.  Moreover, unadorned $\otimes$ means $\otimes_R.$ 
The Picard  group of  $R$ is denoted by \textbf{Pic}($R$) and it  consists  of all $R$-isomorphism classes of f.g.p $R$-modules of rank 1, with binary operation given by $[P][Q]=[P\otimes Q]$.  Recall that its  identity   is $[R]$ and the inverse of $[P]$ in \textbf{Pic}(R) is $[P^*]$, where $P^*=Hom_R(P, R)$. Finally for a ring $A$ and $X,Y\subseteq A$  the set  $\mathcal{U}(A)$ is  the  group of invertible elements  of $A$ and  $XY$ denotes
the set of finite sums of elements of the form $xy$ (or $yx$) for $x \in X$ and $y \in Y.$ 
.
\section{Preliminaries}
In this section we recall some basic notions which will be used in the paper.
\subsection{Partial Actions of groups}
Let $k$ be a commutative ring and $G$ be a group. Following \cite{DE} we say that a \textit{partial action} $\alpha$ of $G$ on  a $k$-algebra $S$ is a family of $k$-algebra isomorphisms $\alpha=\{\alpha_g \colon S_{g\m}\to S_g\}_{g\in G}$, which will be denoted by $(S,\alpha),$ where , for each $g\in G$, $S_g$ is an ideal of $S$ such that
\begin{itemize}
\item[(i)]  $S_1=S,\, \alpha_1=id_S$, where 1 is the identity element of the group $G,$
\item[(ii)] $\alpha_g(S_{g^{-1}}\cap S_h)=S_g\cap S_{gh}, \, \forall g,h \in G$,
\item[(iii)] $\alpha_g \circ \alpha_h(x)=\alpha_{gh}(x),$  $ \forall x\in S_{h^{-1}}\cap S_{{(gh)}^{-1}},\, \forall g,h \in G$.
\end{itemize}
Conditions (ii) and (iii) are equivalent to the fact that $\af_{gh}$ is an extension of $\af_g \circ\af_h,$ moreover we   say that $\alpha$ is {\it global} if $\af_{gh}=\af_g \circ \af_h,$ for all $g,h\in G.$ Two classical examples of partial actions are the following.
\begin{exe} (Induced partial action) Let $\beta$ be a global action of $G$ on a ring $T$ and $S$ a unital ideal of $T.$ For $g\in G$ we set $S_g=S\cap \bt_g(S)$ and $\af_g=\bt_g\mid_{S_{g\m}}$ then the family $\alpha=\{\alpha_g \colon S_{g\m}\to S_g\}_{g\in G}$ is a partial action of $G$ on $S.$
\end{exe}
\begin{exe}\label{ext0} (Extension by zero) Let $G$ be a group $S$ a ring and $H$ a subgroup of $G$ acting (globally)  on $S$ with action $\beta.$ Set $S_g=\{0\}$ for all $g\in G\setminus H.$ Then $\beta^0=\{\bt_g \colon S_{g\m}\to S_g\}_{g\in G}$ is a partial action of $G$ on $S$ and is called the extension by zero of $\beta.$ 
\end{exe}
\begin{defn}\label{equiva}

Let $S$ and $S'$ be two rings with partial action $\alpha$ and $\alpha',$ respectively. We say that  $(S, \alpha)$ and $(S',\alpha')$ are   \emph{$G$-isomorphic}, which is  denoted  by $(S,\alpha)\overset{par}{\sim} (S',\alpha')$, if there is a $k$-algebra isomorphism $f: S\rightarrow S'$ such that for all $g \in G$:
\begin{enumerate}
\item [(i)] $f(S_g)=S'_g$,
\item [(ii)]$ f \circ \alpha_g= \alpha'_g \circ f$ in ${S_{g^{-1}}}$.
\end{enumerate}
\end{defn}
For our purposes we will assume hereafter that $G$ is finite and $\alpha$ is unital, that is, every ideal $S_g$ is unital, with its
identity element denoted by $1_g,$   by \cite[Theorem 4.5]{DE}  this condition is equivalent to say that $\alpha$ possesses a globalization. (see \cite[p. 79]{DFP} for more details). 

 The {\it ring of subinvariants of S} is the set $S^{\alpha}=\{a \in S\mid \alpha_{g}(a1_{g^{-1}})=a1_g\},$ if $\alpha$ is global,  then $S^\alpha$ is denoted by $S^G.$  Let $R$ be unital  a subring of $S$ with $1_R=1_S.$  Then following 
 \cite{DFP}, we say  that $S\supseteq R$ is a \textit{partial Galois extension}  if 
\begin{enumerate}
\item[(i)] $R=S^\alpha$; 
\item[(ii)] For some $m\in \mathbb{N}$ there exist elements $x_i,y_i\in S, 1\leq i\leq m$, such that 
\begin{equation*}\label{G2}
\sum_{i=1}^mx_i\alpha_{g}(y_i1_{g^{-1}})=\delta_{1, g},\, \text{for each}\, g \in G.
\end{equation*}
\end{enumerate}
The elements $x_i,y_i$  in (ii) are called \textit{partial Galois coordinates} of $S$ over $R$.
\begin{rem}\label{isogal}
Let $(T, \beta)$ be a globalization of  $(S, \alpha).$ Then by    \cite[Theorem 3.3]{DFP},  $S\supseteq R$ is a partial Galois extension, if and only if, $T\supseteq T^G$ is a Galois  extension, moreover by \cite[Proposition 2.3]{DFP} there is a $T^G$-bilinear map $\psi\colon T\mapsto T $ such that  $\psi {\mid_R}: R\to T^G$ is a  ring isomorphism  whose inverse is given by  $x\mapsto x1_S,$ for all $x\in T^G.$
\end{rem}

We give the following.
\begin{lem} Let $\alpha$ be a unital partial action of $G$ on $S$ and $R$ a subring of $S.$ Then $S/R$ is a partial Galois extension, if and only if,
\begin{itemize}
\item $S=R^\alpha.$
\item For some $m\in \mathbb{N}$ there exist elements $x_i,y_i\in S, 1\leq i\leq m$, such that 
\end{itemize} \begin{equation}\label{galequiv}\sum_{i=1}^m\alpha_{g}(x_i1_{g^{-1}})\alpha_{h}(y_i1_{h^{-1}})=\delta_{g, h}1_g,\, \text{for all}\,\,  g, h \in G.\end{equation}
\end{lem}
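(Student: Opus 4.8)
The statement is an ``if and only if'', and the first bullet, the condition on the ring of subinvariants $R=S^{\alpha}$, is literally condition (i) in the definition of a partial Galois extension recalled above; it therefore transfers verbatim in both directions and plays no role. The whole content is the equivalence, for a \emph{fixed} family $x_i,y_i\in S$, between the one--variable coordinate identity $\sum_{i}x_i\alpha_g(y_i1_{g^{-1}})=\delta_{1,g}$ ($g\in G$) of \cite{DFP} and the two--variable identity \eqref{galequiv}. The plan is to prove each implication by a direct manipulation: a change of variable in the group argument, a multiplication by a suitable unit $1_{g^{-1}}$ in order to land inside the domain $S_{g^{-1}}$ of $\alpha_g$, and then application of the isomorphism $\alpha_g$, using axioms (ii) and (iii) of a partial action to control the idempotents $1_g$.

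For the direction ``partial Galois $\Rightarrow$ \eqref{galequiv}'' I would fix $g,h\in G$, substitute $g^{-1}h$ for $g$ in the one--variable identity to get $\sum_i x_i\alpha_{g^{-1}h}(y_i1_{h^{-1}g})=\delta_{g,h}$, and multiply through by the central idempotent $1_{g^{-1}}$; since $x_i1_{g^{-1}}\in S_{g^{-1}}$ and $S_{g^{-1}}$ is an ideal, the left-hand side now lies in $S_{g^{-1}}$ and one may apply $\alpha_g$. Multiplicativity of $\alpha_g$ on $S_{g^{-1}}$, the identity $\alpha_{g^{-1}h}(y_i1_{h^{-1}g})1_{g^{-1}}=\alpha_{g^{-1}h}(y_i1_{h^{-1}g}1_{h^{-1}})$ coming from (ii), and the composition rule (iii) in the form $\alpha_g\circ\alpha_{g^{-1}h}=\alpha_h$ on $S_{h^{-1}g}\cap S_{h^{-1}}$ then turn the equation into $\sum_i\alpha_g(x_i1_{g^{-1}})\,\alpha_h(y_i1_{h^{-1}}1_{h^{-1}g})=\delta_{g,h}1_g$; a final use of (ii) (namely $\alpha_h(1_{h^{-1}}1_{h^{-1}g})=1_h1_g$) together with $\alpha_g(x_i1_{g^{-1}})\in S_g$ absorbs the surplus idempotents and yields exactly \eqref{galequiv}.

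For the converse I would specialize \eqref{galequiv} at $h=1$: since $\alpha_1=\mathrm{id}_S$ and $1_1=1_S$, it reads $\sum_i\alpha_g(x_i1_{g^{-1}})\,y_i=\delta_{1,g}$ for all $g\in G$. Now replace $g$ by $g^{-1}$, multiply by $1_{g^{-1}}$ and apply $\alpha_g$; using $\alpha_g\circ\alpha_{g^{-1}}=\mathrm{id}$ on $S_g$ (a case of (iii)) and the fact that $1_g$ is the identity of $S_g\ni\alpha_g(y_i1_{g^{-1}})$, one recovers $\sum_i x_i\alpha_g(y_i1_{g^{-1}})=\delta_{1,g}$, which is precisely condition (ii) of a partial Galois extension for the same pair $x_i,y_i$. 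Combined with $R=S^{\alpha}$ this shows $S/R$ is a partial Galois extension, closing the cycle.

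I expect the only delicate point to be purely bookkeeping: at each step one must check that the element to which $\alpha_g$ is applied genuinely lies in $S_{g^{-1}}$, and correctly identify the idempotent that $\alpha_g$ produces; all of this is governed by axioms (ii) and (iii) and by the hypothesis that each $S_g$ is a unital ideal with central identity $1_g$. An alternative, slightly less computational route would be to pass to a globalization $(T,\beta)$ of $(S,\alpha)$, invoke the classical two--variable characterization of Galois coordinates for $T\supseteq T^{G}$, and transport it back through Remark~\ref{isogal}; but the direct argument above is shorter and fully self-contained.
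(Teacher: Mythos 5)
Your argument is correct and is essentially the paper's own proof: for the forward direction the paper likewise multiplies the one-variable coordinate identity (at the group element $l=g^{-1}h$, so that $gl=h$) by $1_{g^{-1}}$ and applies $\alpha_g$, using (ii)--(iii) to absorb the idempotents, and for the converse it simply sets $h=1$ in \eqref{galequiv}. Your extra step in the converse (applying $\alpha_g$ after replacing $g$ by $g^{-1}$ to recover the coordinates in the original order $x_i,y_i$, rather than invoking commutativity to swap their roles) is only a minor refinement of the same approach.
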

\proof  $(\Rightarrow)$
Suppose that $S/R$ is a partial Galois extension, then $S^\alpha=R.$ Take $g,h\in G,$ then  there is $m\in \N$ and $x_i,y_i\in S, 1\leq i\leq m$, such that 
$\sum_{i=1}^mx_i\alpha_{l}(y_i1_{l^{-1}})=\delta_{1, l},$ for all $l\in G$. Hence,
$\sum\limits_{i=1}^m\af_g(x_i1_{g\m})\alpha_{gl}(y_i1_{(gh)^{-1}})=\af_g(\delta_{1,l}1_{g^{-1}})$. 
In particular, taking $l=hg\m$  we get 
$$\sum\limits_{i=1}^n\alpha_{g}(x_i1_{g^{-1}})\alpha_{h}(y_i1_{h\m})=\alpha_{h}(\delta_{1,hg\m}1_{h^{-1}})=\delta_{g,h}1_g,$$ as desired. For the part   $(\Leftarrow)$  take $h=1.$
 \endproof

\subsubsection{Partial cohomology of groups}\label{coho}

Now we  recall from \cite{DK} some notions about partial cohomology  of groups.

\begin{defn} Let $(S,\alpha)$ be a partial action of $G.$Given $n\in \mathbb{N}$, an $n$-cochain of G with values in S is a function $f:G^n\to S$, such that $f(g_1,\dots,g_n)\in \mathcal{U}(S1_{g_1}1_{g_1g_2}\cdots 1_{g_1g_2\cdots g_n})$. A $0$-cochain is an element of $\mathcal{U}(S)$.
\end{defn}

\begin{rem}
Let $C^n(G,\alpha,S)$ denote the set of all $n$-cochains. This set is an abelian group and its identity is the map $(g_1,\dots, g_n)\mapsto 1_{g_1}1_{g_1g_2}\cdots 1_{g_1g_2\cdots g_n}$ and the inverse of $f\in C^n(G,\alpha,S)$ is $f^{-1}(g_1,\dots, g_n)=f(g_1,\dots, g_n)^{-1}$, where $f(g_1,\dots, g_n)^{-1}$ is the inverse of $f(g_1,\dots, g_n)$ in $S1_{g_1}1_{g_1g_2}\cdots 1_{g_1g_2\cdots g_n}$ for each $g_1,\dots, g_n\in G$.
\end{rem}

\begin{defn}[The coboundary homomorphism]
Let $n\in \mathbb{N}, n>0, \, f\in C^n(G,\alpha, S)$ and $g_1,\dots , g_{n+1}\in G$, set
\small
\begin{align*}
(\delta^nf)(g_1,\dots , g_{n+1})=&\alpha_{g_1}\left(f(g_2,\dots , g_{n+1})1_{g_1^{-1}}\right)\prod_{i=1}^nf(g_1,\dots, g_i,g_{i+1},\dots , g_{n+1})^{(-1)^i}\\
&f(g_1,\dots , g_n)^{(-1)^{n+1}}.
\end{align*}
\normalsize
\end{defn}

By 
\cite[Proposition 1.5]{DK} the map
$\delta^n: C^n(G,\alpha, S) \to C^{n+1}(G,\alpha, S)$  is a group homomorphism such that $$(\delta^{n+1}\delta^nf)(g_1,g_2,\dots, g_{n+2})=1_{g_1}1_{g_1g_2}\cdots 1_{g_1g_2\cdots g_{n+2}},$$ for any $n\in \mathbb{N}, \, f\in C^n(G,\alpha, S)$ and $g_1,g_2,\dots , g_{n+2}\in G$.

\begin{defn}
Let $n\in \mathbb{N}$, we define the groups $Z^n(G,\alpha,S):=ker \delta^n$ of partial n-cocycles, $B^n(G,\alpha,S)=Im \delta^{n-1}$ of partial $n$-coboundaries, and $H^n(G,\alpha,S)=\frac{ker \delta^n}{Im \delta^{n-1}}$ of partial $n$-cohomologies of G with values in S, $n\geq 1$.
\end{defn}

\begin{exe}
\begin{align*}
B^1(G,\alpha, S)&=\{f\in C^1(G,\alpha, S)\mid f(g)=\alpha_g(t1_{g^{-1}})t^{-1}, \, \text{for some}\quad t\in \mathcal{U}(S)\};\\
Z^1(G,\alpha, S)&=\{f\in C^1(G,\alpha, S)\mid f(gh)1_g=f(g)\alpha_g(f(h)1_{g^{-1}}), \, \forall g,h \in G\}.
\end{align*}
\end{exe}

Two cocycles $f,f'\in Z^n(G,\alpha, S)$ are called \textit{cohomologous} if they differ by an $n$-coboundary.\\

Notice that for $f\in  Z^1(G,\alpha, S)$ we get that 
\begin{equation}\label{invf} f\m(gh)1_g=\af_g(f\m(h)1_{g\m})f\m(g),
\end{equation}
for all $g,h\in G.$

\section{Invertible Modules Connected with $Z^1(G,\alpha, S)$}\label{IMCC}
In this section we connect invertible modules with one dimensional cocycles (see Section \ref{coho}) in the partial context. Thus we extend the results of \cite[Section 2]{B} to  the frame partial actions. 

From now on in this work $S$ will denote a commutative algebra over  $k,$ $G$ an abelian group, $(S, \alpha)$, a unital partial action of $G$ on $S$ and $R$ a subring of $S$ such that $S\supseteq R$ is a partial Galois extension with coordinate system $\{x_i,y_i\in S, 1\leq i\leq m\},$ for some $m\in \N.$

An $R$-submodule $X$ of $S$ is called invertible, if there exists a submodule $Y$ of $S$ such that $XY=R. $   We denote by ${\rm Inv}_R(S)$ the group consinsting of invertible $R$-submodules of $S.$

The trace map $tr_{S/R}\colon S\to R$ is defined by $tr_{S/R}(s)=\sum_{g \in G}\alpha_{g}(s1_{g^{-1}}),$ for all $s\in S,$ 
by \cite[Remark 3.4]{DFP}, in the ring $S$ there exists $w\in S$ such that  \begin{equation}\label{tr1}tr_{S/R}(w)=1,\end{equation}

Take $w\in S$ given by \eqref{tr1}, we associate to each one-dimensional cocycle $f\in Z^1(G,\alpha, S)$ the element $\widehat{f}\in End_R(S)$, by setting
\begin{equation}\label{hat}\widehat{f}(x)=\sum_{g \in G}f\m(g)\alpha_{g}(wx1_{g^{-1}}), \quad x\in S.\end{equation}

\begin{prop}
The map $\widehat{f}$ satisfies  $\widehat{f}\circ \widehat{f}=\widehat{f}$ and $Q_f=Im(\widehat{f})$ is a f.g.p $R$-module.
\end{prop}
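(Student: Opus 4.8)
The plan is to prove idempotency of $\widehat{f}$ by a direct computation using the cocycle identity and the partial Galois coordinates, and then deduce that $Q_f = \operatorname{Im}(\widehat{f})$ is a direct summand of $S$ as an $R$-module, hence f.g.p.

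\medskip

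\noindent\textbf{Idempotency.}
First I would expand $\widehat{f}(\widehat{f}(x)) = \sum_{g}f^{-1}(g)\,\alpha_g\!\bigl(w\,\widehat{f}(x)\,1_{g^{-1}}\bigr)$, substituting the definition of $\widehat{f}(x) = \sum_h f^{-1}(h)\,\alpha_h(wx1_{h^{-1}})$ inside. Pushing $\alpha_g$ through the product (using that $\alpha_g$ is a $k$-algebra isomorphism on ideals and multiplicativity together with condition (iii) of a partial action, $\alpha_g\circ\alpha_h = \alpha_{gh}$ on the appropriate intersection of ideals), one gets a double sum
\[
\widehat{f}(\widehat{f}(x)) = \sum_{g,h} f^{-1}(g)\,\alpha_g\!\bigl(f^{-1}(h)1_{g^{-1}}\bigr)\,\alpha_g(w1_{g^{-1}})\,\alpha_{gh}\bigl(wx1_{h^{-1}}1_{(gh)^{-1}}\bigr).
\]
Now I invoke the cocycle relation for $f^{-1}$ recorded in \eqref{invf}, namely $f^{-1}(gh)1_g = \alpha_g(f^{-1}(h)1_{g^{-1}})f^{-1}(g)$, to collapse $f^{-1}(g)\,\alpha_g(f^{-1}(h)1_{g^{-1}})$ into $f^{-1}(gh)1_g$. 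After reindexing the sum by $\ell = gh$ (so $h = g^{-1}\ell$) and interchanging the order of summation, the expression becomes
\[
\sum_{\ell} f^{-1}(\ell)\,\alpha_\ell\!\Bigl(wx1_{\ell^{-1}}\sum_{g}\alpha_{g^{-1}\ell}^{-1}\bigl(\alpha_g(w1_{g^{-1}})\bigr)\Bigr)
\]
or, more cleanly, a sum in which the inner factor over $g$ is exactly $\alpha_\ell\bigl(\sum_g \alpha_{g}(w\,1_{g^{-1}})\,1_{\ell^{-1}}\bigr) = \alpha_\ell\bigl(tr_{S/R}(w)1_{\ell^{-1}}\bigr) = \alpha_\ell(1_{\ell^{-1}}) = 1_\ell$ by \eqref{tr1}. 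The bookkeeping with the ideals $S_g$ and the various $1_g$'s is where I expect the main technical effort to lie; the key identity to keep rewriting is $\alpha_g(a1_{g^{-1}})\alpha_g(b1_{g^{-1}}) = \alpha_g(ab1_{g^{-1}})$ and the fact that $\alpha_g(1_{g^{-1}}) = 1_g$. Once the trace collapses, what remains is $\sum_\ell f^{-1}(\ell)\,\alpha_\ell(wx1_{\ell^{-1}}) = \widehat{f}(x)$, giving $\widehat{f}\circ\widehat{f} = \widehat{f}$.

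\medskip

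\noindent\textbf{The module $Q_f$.}
Since $\widehat{f}$ is an idempotent $R$-linear endomorphism of $S$, we have $S = \operatorname{Im}(\widehat{f})\oplus\ker(\widehat{f}) = Q_f \oplus (1-\widehat{f})(S)$ as $R$-modules. Because $S\supseteq R$ is a partial Galois extension, $S$ is a f.g.p $R$-module (this is standard: one extracts a dual basis from the partial Galois coordinates $\{x_i,y_i\}$, or one passes to the globalization $(T,\beta)$ via Remark \ref{isogal}, where $T$ is f.g.p over $T^G\cong R$, and $S$ is an $R$-module direct summand of $T$). A direct summand of a f.g.p module is again f.g.p, so $Q_f$ is f.g.p over $R$, as claimed. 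I do not expect any difficulty in this second part — the only point worth stating explicitly is that $S$ itself is f.g.p over $R$, which follows from the partial Galois hypothesis standing in force throughout Section \ref{IMCC}.
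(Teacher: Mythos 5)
Your proof is correct and follows essentially the same route as the paper: the idempotency computation is the same expansion of $\widehat{f}\circ\widehat{f}(x)$, collapsed with the inverse-cocycle identity \eqref{invf}, the change of index $l=gh$, and the trace condition \eqref{tr1}. For the second claim the paper only says it ``follows directly,'' and your argument — $\widehat{f}$ idempotent makes $Q_f$ an $R$-module direct summand of $S$, which is f.g.p over $R$ by the partial Galois coordinates (or via the globalization) — is exactly the intended standard justification.
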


\begin{proof} It is clear that $\widehat{f}\in End_R(S).$ Now,  let $x\in S$. Then,
\begin{align*}
\widehat{f}\circ \widehat{f}(x)&=\widehat{f}\left(\sum_{g \in G}f\m(g)\alpha_{g}(wx1_{g^{-1}})\right)\\
&=\sum_{h \in G}f\m(h)\alpha_{h}\left(w\sum_{g \in G}f\m(g)\alpha_{g}(wx1_{g{-1}})1_{h^{-1}}\right)\\
&=\sum_{g, h}f^{-1}(h)\alpha_{h}(w1_{h^{-1}})\alpha_{h}(f\m(g)1_{h^{-1}})\alpha_{h}[\alpha_{g}(wx1_{g^{-1}})1_{h^{-1}}]\\
&=\sum_{g, h}f(h)^{-1}\alpha_{h}(w1_{h^{-1}})\alpha_{h}(f(g)^{-1}1_{g^{-1}})\alpha_{hg}(wx1_{(hg)^{-1}})1_{h}\\
&=\sum_{g, h}f(h)^{-1}\alpha_{h}(f(g)^{-1}1_{g^{-1}})\alpha_{h}(w1_{h^{-1}})\alpha_{hg}(wx1_{(hg)^{-1}})\\
&\stackrel{\eqref{invf}}=\sum_{g, h}f^{-1}(hg)\alpha_{h}(w1_{h^{-1}})\alpha_{hg}(wx1_{(hg)^{-1}})\\
&\stackrel{l=hg}=\sum_{g, l}f^{-1}(l)\alpha_{h}(w1_{h^{-1}})\alpha_{l}(wx1_{l^{-1}})\\
&=\left(\sum_{h}\alpha_{h}(w1_{h^{-1}})\right)\left(\sum_{l}f^{-1}(l)\alpha_{l}(wx1_{l^{-1}})\right)\\
&\stackrel{\eqref{tr1}}=\widehat{f}(x).
\end{align*}
The other affirmation follows directly. 
\end{proof}

\begin{prop}\label{cond} Let $f\in Z^1(G,\alpha, S).$ Then 
 $$Q_f=\{a\in S\mid\alpha_{g}(a1_{g^{-1}})=f(g)a, \forall g\in G \}.$$   In particular  if $e_p:G\to S$ is defined by $g\mapsto 1_g,$  for all $g\in G,$ then $Q_{e_p}=R.$
\end{prop}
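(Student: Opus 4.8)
The plan is to prove the two set equalities $Q_f = \{a\in S \mid \alpha_g(a1_{g^{-1}}) = f(g)a,\ \forall g\in G\}$ and then specialize to $f = e_p$.

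First I would establish the inclusion $\supseteq$. Suppose $a\in S$ satisfies $\alpha_g(a1_{g^{-1}}) = f(g)a$ for all $g\in G$. Then I compute $\widehat{f}(a) = \sum_{g\in G} f^{-1}(g)\alpha_g(wa1_{g^{-1}})$; using that $\alpha_g$ is a ring homomorphism on $S_{g^{-1}}$, I would write $\alpha_g(wa1_{g^{-1}}) = \alpha_g(w1_{g^{-1}})\alpha_g(a1_{g^{-1}}) = \alpha_g(w1_{g^{-1}})f(g)a$. Substituting, the $f^{-1}(g)f(g) = 1_g$ cancellation (one must be a little careful: $f^{-1}(g)f(g) = 1_g$ is the identity of $S_g$, and $\alpha_g(w1_{g^{-1}})\in S_g$, so multiplying by $1_g$ is harmless) leaves $\widehat{f}(a) = \sum_{g\in G}\alpha_g(w1_{g^{-1}})a = tr_{S/R}(w)a = a$ by \eqref{tr1}. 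Hence $a = \widehat{f}(a)\in Q_f$.

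Next the inclusion $\subseteq$. Take $a = \widehat{f}(x)\in Q_f$ for some $x\in S$; I want to show $\alpha_h(a1_{h^{-1}}) = f(h)a$ for each $h\in G$. I would apply $\alpha_h$ to $a1_{h^{-1}} = \sum_{g\in G} f^{-1}(g)\alpha_g(wx1_{g^{-1}})1_{h^{-1}}$, push $\alpha_h$ inside the sum (it is additive and multiplicative on $S_{h^{-1}}$), obtaining $\sum_g \alpha_h(f^{-1}(g)1_{h^{-1}})\alpha_h(\alpha_g(wx1_{g^{-1}})1_{h^{-1}})$. The composition rule (iii) for partial actions gives $\alpha_h(\alpha_g(wx1_{g^{-1}})1_{h^{-1}}) = \alpha_{hg}(wx1_{(hg)^{-1}})1_h$. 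Then I multiply the whole thing by $f(h)$ and use the cocycle identity \eqref{invf}, namely $f^{-1}(hg)1_h = \alpha_h(f^{-1}(g)1_{h^{-1}})f^{-1}(h)$, rearranged to $f(h)\alpha_h(f^{-1}(g)1_{h^{-1}}) = f^{-1}(hg)1_h \cdot (\text{correction})$; more precisely I reindex by $l = hg$ and, after the dust settles, the sum $\sum_l f^{-1}(l)\alpha_l(wx1_{l^{-1}})$ reappears, giving $f(h)a$ up to the idempotent $1_h$ — and since $\alpha_h(a1_{h^{-1}})\in S_h$ the $1_h$ is absorbed. This is essentially the same manipulation already carried out in the proof of $\widehat{f}\circ\widehat{f} = \widehat{f}$, restricted to the $h$-component rather than summed, so I would model the computation on that display.

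For the final claim, with $e_p(g) = 1_g$, the cocycle condition $\alpha_g(a1_{g^{-1}}) = e_p(g)a = 1_g a = a1_g$ is precisely the defining condition for $a\in S^\alpha = R$, so $Q_{e_p} = S^\alpha = R$ by the partial Galois hypothesis; one should check first that $e_p\in Z^1(G,\alpha,S)$, which is immediate from the cocycle formula in the Example since $1_{gh}1_g = 1_g\alpha_g(1_h 1_{g^{-1}}) = 1_g 1_{gh}$ by condition (ii). The main obstacle is bookkeeping with the local identities $1_g$: every equality lives in an ideal $S_g$ (or an intersection of such), so one must repeatedly justify that multiplying by the relevant idempotent changes nothing, and that the cocycle relation \eqref{invf} is being applied with the correct truncation. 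No deep idea is needed beyond what the preceding proposition already demonstrates.
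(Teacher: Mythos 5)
Your proposal is correct and follows essentially the paper's own argument: the inclusion $Q_f\subseteq\{a\mid \alpha_g(a1_{g^{-1}})=f(g)a\}$ is the same computation with \eqref{invf}, the reindexing $l=hg$, and absorption of $1_h$, and the identification $Q_{e_p}=S^{\alpha}=R$ is the intended justification of the final claim. The only (harmless) divergence is in the reverse inclusion, where you simply verify $\widehat{f}(a)=a$ using the hypothesis and $tr_{S/R}(w)=1$, a slightly more direct route than the paper's expansion of $a1_g$ via the trace element followed by setting $g=1$, but it is the same underlying manipulation.
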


\begin{proof}
Let $a\in Q_f=Im(\widehat{f})$. Then, $a=\sum\limits_{h\in G}f(h)^{-1}\alpha_{h}(wx1_{h^{-1}}),$ for some $x\in S$. Then,
\begin{align*}
\alpha_{g}(a1_{g^{-1}})&=\sum_{h\in G}\alpha_{g}(f(h)^{-1}1_{g^{-1}})\alpha_{g}(\alpha_{h}(wx1_{h^{-1}})1_{g^{-1}})\\
&\stackrel{\eqref{invf}}=\sum_{h\in G}f^{-1}(gh)f(g)\alpha_{gh}(wx1_{(gh)^{-1}})1_g\\
&\stackrel{f(g)\in S_g}=f(g)\sum_{h}f^{-1}(gh)\alpha_{gh}(wx1_{(gh)^{-1}})\\
&=f(g)a.
\end{align*}
Conversely, assume that $f(g)a=\alpha_{g}(a1_{g}),$ for all $g\in G$. Thus,
\begin{align*}
a1_g&=f^{-1}(g)\alpha_{g}(a1_{g^{-1}})\\
&=f^{-1}(g)\alpha_{g}(a1_S1_{g^{-1}})\\
&=f^{-1}(g)\alpha_{g}\left(a\sum_{h \in G}\alpha_{h}(w1_{h^{-1}})1_{g^{-1}}\right)\\
&=f^{-1}(g)\alpha_{g}\left(\sum_{h \in G}\alpha_{h}(w1_{h^{-1}})\alpha_{h}(\alpha_{h^{-1}}(a1_{h}))1_{g^{-1}}\right)\\
&=\sum_{h \in G}f^{-1}(g)\alpha_{g}[\alpha_{h}(w\alpha_{h^{-1}}(a1_{h})1_{h^{-1}})1_{g^{-1}}]\\
&=\sum_{h \in G}f^{-1}(g)\alpha_{gh}(w\alpha_{h^{-1}}(a1_{h})1_{(gh)^{-1}})1_{g}.
\end{align*}
Again by \eqref{invf} we get
$$f\m(g)1_g=f^{-1}((gh)h^{-1})1_{g}=f^{-1}(gh)\alpha_{gh}(f^{-1}(h^{-1})1_{gh}).$$
We have that,
\begin{align*}
a1_{g}
&=\sum_{h \in G}f^{-1}(gh)\alpha_{gh}(wf^{-1}(h^{-1})\alpha_{h^{-1}}(a1_{h})1_{(gh)^{-1}}),
\end{align*} for all $g\in G.$ In particular, taking $g=1$ we get that $a\in Q_f=Im(\widehat{f})$.
\end{proof}

\begin{prop}\label{free}
If $f\in B^1(G,\alpha, S),$ that is $f(g)=\alpha_{g}(u1_{g^{-1}})u^{-1}, $ for some $u\in \mathcal{U}(S),$ then $Q_f=Ru.$ Moreover, if the cocycles $f,f'\in Z^1(G,\alpha, S)$ are cohomologous, i.e., $f(g)=f'(g)\alpha_{g}(u1_{g^{-1}})u^{-1}$ for some $u\in \mathcal{U}(S),$ then $Q_f=Q_{f'}u$.
\end{prop}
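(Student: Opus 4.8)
The plan is to reduce the statement to the explicit description of $Q_f$ obtained in Proposition~\ref{cond}, namely $Q_f=\{a\in S\mid \alpha_g(a1_{g^{-1}})=f(g)a \text{ for all } g\in G\}$; once this is available, everything becomes a direct verification. I would prove the second (``moreover'') assertion first, because the first one follows from it by specialization: if $f(g)=\alpha_g(u1_{g^{-1}})u^{-1}$, then taking $f'=e_p$ we have $f(g)=1_g\,\alpha_g(u1_{g^{-1}})u^{-1}=e_p(g)\alpha_g(u1_{g^{-1}})u^{-1}$ (here $1_g\alpha_g(u1_{g^{-1}})=\alpha_g(u1_{g^{-1}})$ since $\alpha_g(u1_{g^{-1}})\in S_g$), so $f$ and $e_p$ are cohomologous and the general case gives $Q_f=Q_{e_p}u=Ru$, using $Q_{e_p}=R$ from Proposition~\ref{cond}.

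For the general case, assume $f(g)=f'(g)\alpha_g(u1_{g^{-1}})u^{-1}$ with $u\in\mathcal{U}(S)$. The key bookkeeping fact is that $\alpha_g(u1_{g^{-1}})$ and $\alpha_g(u^{-1}1_{g^{-1}})$ are mutually inverse inside the unital ideal $S_g$, since $\alpha_g(u1_{g^{-1}})\alpha_g(u^{-1}1_{g^{-1}})=\alpha_g(uu^{-1}1_{g^{-1}})=\alpha_g(1_{g^{-1}})=1_g$. Given $a\in Q_f$, I would compute $\alpha_g((au^{-1})1_{g^{-1}})=\alpha_g(a1_{g^{-1}})\alpha_g(u^{-1}1_{g^{-1}})$, substitute $\alpha_g(a1_{g^{-1}})=f(g)a=f'(g)\alpha_g(u1_{g^{-1}})u^{-1}a$, and then use commutativity of $S$ together with the displayed identity to collapse the right-hand side to $f'(g)(au^{-1})$; this shows $au^{-1}\in Q_{f'}$, whence $Q_f\subseteq Q_{f'}u$. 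Running the same computation in the opposite direction, for $a\in Q_{f'}$ one multiplies by $u$ and finds $\alpha_g((au)1_{g^{-1}})=f'(g)a\,\alpha_g(u1_{g^{-1}})=f(g)(au)$, so $au\in Q_f$ and $Q_{f'}u\subseteq Q_f$; hence $Q_f=Q_{f'}u$.

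The delicate points --- and the only thing I would call an obstacle, though a minor one --- concern the local identities: one must check that $f'(g)1_g=f'(g)$ (valid because $f'(g)\in\mathcal{U}(S1_g)$), that the factor $u^{-1}u=1_S$ acts trivially on elements of $S$, and that $\alpha_g(u^{-1}1_{g^{-1}})$ really inverts $\alpha_g(u1_{g^{-1}})$ within $S_g$ rather than merely in some larger ideal. These are precisely the places where the partiality of $\alpha$ intervenes, but since $S$ is commutative the rearrangements go through without difficulty, and no cohomological input beyond the cocycle identity already used in Proposition~\ref{cond} is required.
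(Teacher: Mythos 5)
Your proposal is correct and uses essentially the same argument as the paper: everything reduces to the description of $Q_f$ in Proposition~\ref{cond} together with multiplicativity of $\alpha_g$ on the unital ideal $S_{g^{-1}}$ and commutativity of $S$. The only differences are organizational --- you prove the ``moreover'' part first (with both inclusions written out) and obtain $Q_f=Ru$ by specializing to $f'=e_p$, whereas the paper proves $Q_f=Ru$ directly and then sketches one inclusion of the cohomologous case --- so the mathematical content coincides.
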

\begin{proof} Since $f\in B^1(G,\alpha, S)$ there is $u\in \mathcal{U}(S)$ such that  $f(g)=\alpha_{g}(u1_{g^{-1}})u^{-1}, $ for all $g\in G.$
We shall prove that $Q_f=Ru.$ Let  $a\in Q_f$ then $au\m \in R,$  indeed  for $g\in G,$ we have by  Proposition \ref{cond} that
$$\af_g(au\m1_{g\m})=\af_g(a1_{g\m})\af_g(u\m1_{g\m})=f(g)af\m(g)u\m=au\m1_g,$$ and we get that $au\m \in R,$ that is $a\in Ru.$ For the other inclusion, 
since $f(g)u=\alpha_{g}(u1_{g^{-1}})$ we have   $u\in Q_f$ and thus $Ru\subseteq Q_f$.  Now take $f,f'\in Z^1(G,\alpha, S)$  cohomologous and  $a \in Q_f,$ then  for any $g\in G$ 
\begin{align*} au\m f'(g)&=af(g)\alpha_{g}(u\m1_{g^{-1}})=\af_g(au\m1_{g\m}),
\end{align*} and $au\m \in Q_{f'},$  from this we get that $Q_f=Q_{f'}u,$ as desired.
\end{proof}

Now we prove that the $R$-module $Q_f$ does not depend on the choice of an element $w$ with trace 1. Indeed, consider the $R$-homomorphism $\widetilde{f}\in End_R(S)$, defined by
\begin{equation}\label{wildef}\widetilde{f}(x)=\sum_{g \in G}f^{-1}(g)\alpha_{g}(x1_{g^{-1}}),\end{equation} for all $x\in S.$ Then we have the following.

\begin{prop} \label{equal} Let $\widetilde{f}$ defined by \eqref{wildef}. Then
$Im(\widetilde{f})=Q_f=Im(\widehat{f})$.
\end{prop}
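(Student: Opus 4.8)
The plan is to show the two maps $\widehat{f}$ and $\widetilde{f}$ have the same image by proving that each factors through the other. The key observation is that $\widehat{f}(x) = \widetilde{f}(wx)$ directly from the definitions \eqref{hat} and \eqref{wildef}: indeed $\widehat{f}(x) = \sum_{g}f^{-1}(g)\alpha_g(wx1_{g^{-1}}) = \widetilde{f}(wx)$. Hence $\mathrm{Im}(\widehat{f}) \subseteq \mathrm{Im}(\widetilde{f})$ is immediate, and also $Q_f = \mathrm{Im}(\widehat{f}) \subseteq \mathrm{Im}(\widetilde{f})$ by the previous propositions.

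For the reverse inclusion I would show $\mathrm{Im}(\widetilde{f}) \subseteq Q_f$ by verifying that every element $\widetilde{f}(x)$ satisfies the defining relation of $Q_f$ from Proposition \ref{cond}, namely $\alpha_h(\widetilde{f}(x)1_{h^{-1}}) = f(h)\widetilde{f}(x)$ for all $h \in G$. This is the same computation already carried out in the first half of the proof of Proposition \ref{cond}: expand $\alpha_h(\widetilde{f}(x)1_{h^{-1}}) = \sum_{g}\alpha_h(f^{-1}(g)1_{h^{-1}})\alpha_{hg}(x1_{(hg)^{-1}})1_h$, apply the cocycle identity \eqref{invf} in the form $\alpha_h(f^{-1}(g)1_{h^{-1}})1_h = f^{-1}(hg)f(h)$, reindex by $l = hg$, and factor out $f(h)$ (using $f(h) \in S_h$) to obtain $f(h)\sum_l f^{-1}(l)\alpha_l(x1_{l^{-1}}) = f(h)\widetilde{f}(x)$. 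Thus $\widetilde{f}(x) \in Q_f$ for every $x \in S$, so $\mathrm{Im}(\widetilde{f}) \subseteq Q_f = \mathrm{Im}(\widehat{f})$, and combined with the first inclusion we get $\mathrm{Im}(\widetilde{f}) = Q_f = \mathrm{Im}(\widehat{f})$.

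I do not expect a serious obstacle here, since both inclusions reduce to manipulations already established in the excerpt; the only point requiring a little care is making sure the idempotent/endomorphism bookkeeping with the local identities $1_g$ is handled exactly as in Proposition \ref{cond}, in particular the step where \eqref{invf} is invoked together with $f(h)\in S_h$ to legitimately pull $f(h)$ out of the sum. An alternative, equally short route would be to note that $\widehat{f} = \widetilde{f}\circ\mu_w$ (composition with multiplication by $w$) and $\widetilde{f}$ restricted to $Q_f$ is, up to the trace relation \eqref{tr1}, a multiple of the identity, which again yields $\mathrm{Im}(\widetilde{f}) = Q_f$; but the direct verification via Proposition \ref{cond} is cleaner and avoids introducing new notation.
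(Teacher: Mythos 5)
Your proposal is correct, and it handles the nontrivial inclusion by a different (and arguably cleaner) route than the paper. Both arguments get $\mathrm{Im}(\widehat{f})\subseteq \mathrm{Im}(\widetilde{f})$ exactly as you do, from the identity $\widehat{f}(x)=\widetilde{f}(wx)$. For the reverse inclusion the paper does a self-contained computation showing $\widehat{f}\circ\widetilde{f}=\widetilde{f}$, i.e.\ it writes $\widetilde{f}(x)=\widehat{f}\bigl(\widetilde{f}(x)\bigr)\in \mathrm{Im}(\widehat{f})$; this uses the cocycle identity \eqref{invf}, a reindexing $l=hg$, and at the last step the trace relation \eqref{tr1} for the element $w$. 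You instead verify directly that every $\widetilde{f}(x)$ satisfies the eigen-relation $\alpha_h(\widetilde{f}(x)1_{h^{-1}})=f(h)\widetilde{f}(x)$ and then invoke the characterization $Q_f=\{a\in S\mid \alpha_g(a1_{g^{-1}})=f(g)a,\ \forall g\in G\}$ of Proposition \ref{cond}; your use of \eqref{invf} in the form $\alpha_h(f^{-1}(g)1_{h^{-1}})1_h=f^{-1}(hg)f(h)$ and the factoring of $f(h)\in S_h$ out of the sum are exactly as in the first half of that proposition's proof (with $wx$ replaced by $x$, which changes nothing), so the step is sound. What your route buys is that it reuses an already established result and avoids touching $w$ and \eqref{tr1} in this inclusion; what the paper's route buys is the composition identity $\widehat{f}\circ\widetilde{f}=\widetilde{f}$ itself, which is slightly more information than the equality of images. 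One caveat on your closing aside: on $Q_f$ the map $\widetilde{f}$ acts as multiplication by $\sum_{g\in G}1_g$, which is in general neither $n\cdot\mathrm{id}$ nor invertible, so that sketched alternative would not work as stated; but since your main argument does not rely on it, the proof stands.
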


\begin{proof}
First all we prove that $Im(\widetilde{f})\subseteq Im(\widehat{f})$. Indeed, for $x\in S$ we have that
\begin{align*}
\widehat{f}\circ \widetilde{f}(x)&=\widehat{f}\left(\sum_{g \in G}f^{-1}(g)\alpha_{g}(x1_{g^{-1}})\right)\\
&=\sum_{h \in G}f(h)^{-1}\alpha_{h}\left(w\sum_{g \in G}f^{-1}(g)\alpha_{g}(x1_{g^{-1}})\right)\\
&=\sum_{g, h}f(h)^{-1}\alpha_{h}(w1_{h^{-1}})\alpha_{h}(f^{-1}(g)\alpha_{g}(x1_{g^{-1}}))\\
&=\sum_{g, h}\alpha_{h}(w1_{h^{-1}})f(h)^{-1}\alpha_{h}(f^{-1}(g)\alpha_{g}(x1_{g^{-1}}))\\
&=\sum_{g, h}\alpha_{h}(w1_{h^{-1}})f(h)^{-1}\alpha_{h}(f^{-1}(g)1_{g^{-1}})\alpha_{h}(\alpha_{g}(x1_{g^{-1}})1_{h^{-1}})\\
&=\sum_{g, h}\alpha_{h}(w1_{h^{-1}})f^{-1}(h g)\alpha_{hg}(x1_{(gh)^{-1}})1_{h}\\
&\stackrel{l=hg}=\sum_{h, l}\alpha_{h}(w1_{h^{-1}})f^{-1}(l)\alpha_{l}(x1_{(l^{-1}})1_{h}\\
&=\left(\sum_{h \in G}\alpha_{h}(w1_{h^{-1}})\right)\left(\sum_{l \in G}f^{-1}(l)\alpha_{l}(x1_{l^{-1}})\right)\\
&=\sum_{g \in G}f^{-1}(l)\alpha_{l}(x1_{l^{-1}})\\
&=\widetilde{f}(x).
\end{align*}
The other inclusion follows from the fact that $\widehat{f}(x)=\widetilde{f}(wx)$ for all $x\in S.$
\end{proof}

\begin{prop}\label{ann} The equality
$Q_fS=S$ holds. Furthermore, 
\begin{equation} \label{equal1}\sum_{i=1}^m\widehat{f}(x_i)\widetilde{f\m}(y_i)=1,\end{equation} 
where  $x_1,\dots, x_m; y_1,\dots, y_m\in S$ is a partial Galois coordinates system of $S$ over $R.$
\end{prop}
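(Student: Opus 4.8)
The plan is to establish the two assertions in tandem, using the Galois coordinates $\{x_i,y_i\}$ as the central tool. First I would observe that by Proposition \ref{equal} we may freely switch between the descriptions $Q_f = \operatorname{Im}(\widehat f)$ and $Q_f = \operatorname{Im}(\widetilde f)$, and similarly $Q_{f\m} = \operatorname{Im}(\widehat{f\m}) = \operatorname{Im}(\widetilde{f\m})$. The natural candidate for a ``partition of unity'' witnessing $Q_f S = S$ is precisely the expression in \eqref{equal1}: if I can show $\sum_{i=1}^m \widehat f(x_i)\,\widetilde{f\m}(y_i) = 1$, then since each $\widehat f(x_i)\in Q_f$ and each $\widetilde{f\m}(y_i)\in Q_{f\m}\subseteq S$, it follows immediately that $1\in Q_f S$, hence $Q_f S = S$. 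So the whole proposition reduces to verifying the identity \eqref{equal1}.

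To prove \eqref{equal1}, I would expand both factors using the definitions \eqref{hat} and \eqref{wildef}:
\begin{equation*}
\sum_{i=1}^m \widehat f(x_i)\,\widetilde{f\m}(y_i) = \sum_{i=1}^m \Bigl(\sum_{g\in G} f\m(g)\alpha_g(w x_i 1_{g\m})\Bigr)\Bigl(\sum_{h\in G} f(h)\alpha_h(y_i 1_{h\m})\Bigr),
\end{equation*}
using that $(f\m)\m = f$, so the cocycle attached to $\widetilde{f\m}$ contributes $f(h)$. Interchanging sums, this becomes $\sum_{g,h\in G} f\m(g) f(h)\,\alpha_g(w x_i 1_{g\m})\,\alpha_h(y_i 1_{h\m})$ summed over $i$. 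The idempotent-type manipulations already rehearsed in the proofs of the previous propositions (pulling $f(h)$ inside, using that $\alpha_g(w x_i 1_{g\m})$ lies in $S_g$ and $\alpha_h(y_i1_{h\m})$ lies in $S_h$, so their product is supported on $S_g\cap S_h$) should let me combine the two $\alpha$-terms. The key step is to move $w$ through: write $\alpha_g(w x_i 1_{g\m})\alpha_h(y_i 1_{h\m})$ and use $\alpha$-locality together with the cocycle relation \eqref{invf} for $f\m$ in the form $f\m(g)\cdot(\text{stuff on }S_g\cap S_h)$ — I expect to arrive, after the substitution $l = g\m h$ or similar, at $\sum_l (\text{char. element}) \cdot \bigl(\sum_i \alpha_g(w x_i 1_{g\m})\alpha_h(y_i1_{h\m})\bigr)$ where the inner double sum over $i$ collapses via the Galois coordinate identity \eqref{galequiv}, namely $\sum_i \alpha_g(x_i 1_{g\m})\alpha_h(y_i 1_{h\m}) = \delta_{g,h} 1_g$.

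Concretely, I anticipate the cleanest route is: fix the computation of $\sum_i \alpha_g(wx_i 1_{g\m})\alpha_h(y_i 1_{h\m})$. Since $w$ is a fixed element and $\alpha_g$ is a ring homomorphism on $S_{g\m}$, we have $\alpha_g(wx_i1_{g\m}) = \alpha_g(w1_{g\m})\alpha_g(x_i1_{g\m})$, so $\sum_i \alpha_g(wx_i1_{g\m})\alpha_h(y_i1_{h\m}) = \alpha_g(w1_{g\m})\sum_i\alpha_g(x_i1_{g\m})\alpha_h(y_i1_{h\m}) = \alpha_g(w1_{g\m})\,\delta_{g,h}1_g$ by \eqref{galequiv}. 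Substituting back, only the diagonal $g=h$ survives, giving $\sum_{g\in G} f\m(g)f(g)\,\alpha_g(w1_{g\m})$; but $f\m(g)f(g) = 1_{g}\cdot(\text{domain idempotent})$ by the cochain group structure, i.e.\ $f\m(g)f(g)$ equals the identity $1_g$ of the ideal where $f(g)$ lives, so $f\m(g)f(g)\alpha_g(w1_{g\m}) = \alpha_g(w1_{g\m})$. Hence the total is $\sum_{g\in G}\alpha_g(w1_{g\m}) = tr_{S/R}(w) = 1$ by \eqref{tr1}. The main obstacle I foresee is bookkeeping the unital idempotents $1_g$ correctly — making sure that $f\m(g)f(g)$ genuinely acts as $1_g$ on the relevant ideal and that all products land in the expected $S_g$ — but the patterns needed are exactly those already used verbatim in the proofs of Propositions \ref{cond} and \ref{equal}, so no genuinely new idea is required.
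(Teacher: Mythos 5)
Your proposal is correct and follows essentially the same route as the paper: expand $\widehat f(x_i)$ and $\widetilde{f\m}(y_i)$ via \eqref{hat} and \eqref{wildef}, factor out $\alpha_g(w1_{g\m})$ by multiplicativity of $\alpha_g$, collapse the double sum to the diagonal using the Galois coordinate identity \eqref{galequiv}, cancel $f\m(g)f(g)=1_g$, and conclude with $tr_{S/R}(w)=1$; the deduction $1\in Q_fS\Rightarrow Q_fS=S$ is likewise the paper's argument, just phrased as a chain of inclusions there.
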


\begin{proof} Let  $x_1,\dots, x_m; y_1,\dots, y_m\in S$ be a partial Galois coordinates of $S$ over $R,$ then
\begin{align*}
\sum_{i=1}^m\widehat{f}(x_i)\widetilde{f\m}(y_i)&=\sum_{i=1}^m\left(\sum_{g \in G}f(g)^{-1}\alpha_{g}(wx_i1_{g^{-1}})\right)\left(\sum_{h \in G}f(h)\alpha_{h}(y_i1_{h^{-1}})\right)\\
&=\sum_{g,h}\sum_{i=1}^mf\m(g)f(h)\alpha_{g}(wx_i1_{g^{-1}})\alpha_{h}(y_i1_{h^{-1}})\\
&=\sum_{g,h}\sum_{i=1}^mf\m(g)f(h)\alpha_{g}(wx_i1_{g^{-1}})\alpha_{h}(y_i1_{h^{-1}})\\
&=\sum_{g,h}f\m(g)\alpha_{g}(w1_{g^{-1}})f(h)\sum_{i=1}^m\alpha_{g}(x_i1_{g^{-1}})\alpha_{h}(y_i1_{h^{-1}})\\
&\stackrel{\eqref{galequiv}}=\sum_{g,h}f\m(g)\alpha_{g}(w1_{g^{-1}})f(h)\delta_{g,h}\\
&=\sum_{g}f\m(g)\alpha_{g}(w1_{g^{-1}})f(g)\\
&=\sum_{g}\alpha_{g}(w1_{g^{-1}})=1.
\end{align*}
With respect to the equality  $Q_fS= S,$ we have that
 
\begin{equation*}
Q_fS\subseteq S=\sum_{i=1}^m\widehat{f}(x_i)\widetilde{f^{-1}}(y_i)S\subseteq \sum_{i=1}^m\widehat{f}(x_i)S\subseteq Q_fS.
\end{equation*}

\end{proof}
The following is a consequence of  Proposition \ref{ann}.
\begin{cor}\label{faithful}
$Q_f$ is a faithful $R$-module (in particular, $Q_f\ne 0$).
\end{cor}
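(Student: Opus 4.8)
The statement to prove is Corollary \ref{faithful}: that $Q_f$ is a faithful $R$-module (and in particular nonzero). The plan is to derive this directly from Proposition \ref{ann}, which gives the identity $\sum_{i=1}^m\widehat{f}(x_i)\widetilde{f^{-1}}(y_i)=1$ together with the equality $Q_fS=S$. The key observation is that the elements $\widehat{f}(x_i)$ all lie in $Q_f$ (by Proposition \ref{equal}, since $Q_f=Im(\widehat{f})$), so the displayed equation exhibits $1\in S$ as an $S$-linear combination of elements of $Q_f$; more precisely it shows $1\in Q_fS$.

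First I would recall what faithfulness means: an $R$-module $M$ is faithful if $rM=0$ for $r\in R$ forces $r=0$. So suppose $r\in R$ satisfies $rQ_f=0$. Then in particular $r\,\widehat{f}(x_i)=0$ for each $i=1,\dots,m$. Multiplying the identity $\sum_{i=1}^m\widehat{f}(x_i)\widetilde{f^{-1}}(y_i)=1$ by $r$ and using commutativity of $S$, we get
\[
r=r\cdot 1=\sum_{i=1}^m \bigl(r\,\widehat{f}(x_i)\bigr)\widetilde{f^{-1}}(y_i)=\sum_{i=1}^m 0\cdot \widetilde{f^{-1}}(y_i)=0,
\]
so $r=0$, proving $Q_f$ is faithful. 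For the parenthetical remark that $Q_f\neq 0$: if $Q_f=0$ then $1\cdot Q_f=0$ with $1\neq 0$ in $R$ (the ring has an identity and we are in a nontrivial setting), contradicting faithfulness; alternatively and more directly, $Q_fS=S$ from Proposition \ref{ann} forces $Q_f\neq 0$ since $S\neq 0$.

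There is essentially no obstacle here — the corollary is a one-line consequence of the preceding proposition, and the only thing to be careful about is making explicit that $\widehat{f}(x_i)\in Q_f$ (which is the definition $Q_f=Im(\widehat{f})$) so that $r\,\widehat{f}(x_i)=0$ genuinely follows from the hypothesis $rQ_f=0$. If one prefers, the whole argument can be phrased through the $S$-module structure: the identity of Proposition \ref{ann} shows that the natural map $Q_f\otimes_R S\to S$ (or simply the inclusion-induced statement $Q_fS=S$) is surjective, and an annihilator of $Q_f$ over $R$ would annihilate $Q_fS=S$, hence annihilate $1$, hence be zero. Either formulation is short; I would write the direct computation above as it is the most transparent.
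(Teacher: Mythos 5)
Your argument is correct and is exactly the intended one: the paper gives no separate proof beyond declaring the corollary a consequence of Proposition \ref{ann}, and the route you take --- multiplying the identity $\sum_{i=1}^m\widehat{f}(x_i)\widetilde{f^{-1}}(y_i)=1$ by an annihilating element $r\in R$ (equivalently, noting $1\in Q_fS$) --- is precisely that consequence. Nothing is missing.
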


\begin{prop}\label{6}
Let $f,f'\in Z^1(G,\alpha, S)$. Then $Q_fQ_{f'}=Q_{ff'}$.
\end{prop}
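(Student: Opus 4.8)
The plan is to prove the two inclusions separately, using the characterization of $Q_f$ from Proposition \ref{cond} for the easy one and the faithfulness/generating properties from Proposition \ref{ann} and Corollary \ref{faithful} for the harder one. First I would show $Q_f Q_{f'} \subseteq Q_{ff'}$. Take $a \in Q_f$ and $b \in Q_{f'}$; by Proposition \ref{cond} we have $\alpha_g(a1_{g^{-1}}) = f(g)a$ and $\alpha_g(b1_{g^{-1}}) = f'(g)b$ for all $g \in G$. Since $S$ is commutative and $\alpha_g$ is a ring homomorphism on $S_{g^{-1}}$, I would compute $\alpha_g(ab1_{g^{-1}}) = \alpha_g(a1_{g^{-1}})\alpha_g(b1_{g^{-1}}) = f(g)f'(g)ab$, using that $ab1_g = ab$ automatically holds once one checks $a, b$ already live in the appropriate ideals (indeed $a1_g = \alpha_g(a1_{g^{-1}})f(g)^{-1}\cdot$(unit relation) shows $a \in S1_g$-type membership is not needed; more carefully $\alpha_g(ab 1_{g^{-1}}) = f(g)a \cdot f'(g)b = (ff')(g)(ab)$ after noting $f(g), f'(g) \in S_g$ so the product lies in $S1_g$). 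Hence each product $ab$, and therefore every finite sum of such products, lies in $Q_{ff'}$ by the description in Proposition \ref{cond}. This gives $Q_f Q_{f'} \subseteq Q_{ff'}$.

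For the reverse inclusion $Q_{ff'} \subseteq Q_f Q_{f'}$, the idea is to use the partial Galois coordinates together with the identity \eqref{equal1} of Proposition \ref{ann}. Let $c \in Q_{ff'}$. I want to write $c$ as a finite sum of products of an element of $Q_f$ and an element of $Q_{f'}$. The natural candidate is to insert $c$ into the resolution of the identity: from Proposition \ref{ann} applied to $f'$ (or a suitable variant), $\sum_i \widehat{f'}(x_i)\widetilde{(f')^{-1}}(y_i) = 1$, so $c = \sum_i \widehat{f'}(x_i)\cdot \widetilde{(f')^{-1}}(y_i)c$. Here $\widehat{f'}(x_i) \in Q_{f'}$ by definition, so it remains to show that $\widetilde{(f')^{-1}}(y_i)\,c \in Q_f$ for each $i$ — i.e. that multiplying an element of $Q_{ff'}$ by $\widetilde{(f')^{-1}}(y)$ lands in $Q_f$. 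Using Proposition \ref{cond}: for $g \in G$, $\alpha_g\big(\widetilde{(f')^{-1}}(y)c\,1_{g^{-1}}\big)$ should be computed. One knows $\alpha_g(c1_{g^{-1}}) = (ff')(g)c$, and the transformation behavior of $\widetilde{h}(y)$ under $\alpha_g$ (analogous to the cocycle-twisting computations already carried out in the proofs of Propositions \ref{cond} and \ref{equal}) gives $\alpha_g\big(\widetilde{(f')^{-1}}(y)1_{g^{-1}}\big) = f'(g)^{-1}\cdot(\text{something in }Q)$; combining, the $(ff')(g)$ and $f'(g)^{-1}$ factors collapse to $f(g)$, which is exactly the membership condition for $Q_f$. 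Thus $c \in Q_{f'}\cdot Q_f = Q_f Q_{f'}$ by commutativity.

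The main obstacle I anticipate is the bookkeeping in the second inclusion: verifying that $\widetilde{(f')^{-1}}(y)\cdot c$ satisfies $\alpha_g(\,\cdot\,1_{g^{-1}}) = f(g)(\,\cdot\,)$ requires carefully tracking the idempotents $1_g, 1_{g^{-1}}, 1_{gh}$ and invoking the cocycle identity \eqref{invf} for $f'$, exactly in the style of the displayed calculation in the proof of Proposition \ref{equal}. An alternative, possibly cleaner route would be to pass through the globalization $(T,\beta)$ of Remark \ref{isogal}: the $Q_f$ correspond to rank-one pieces in a $\mathbb{Z}$-graded decomposition of $T$ indexed by characters of $G$, and the multiplicativity $Q_f Q_{f'} = Q_{ff'}$ then reduces to the classical statement in \cite[Section 2]{B} for the Galois extension $T \supseteq T^G$, transported back via the bilinear map $\psi$. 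I would attempt the direct computation first, since the required twisting lemmas are already essentially established in the preceding propositions, and fall back on the globalization argument only if the idempotent bookkeeping becomes unwieldy.
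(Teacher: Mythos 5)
Your proposal is correct. The first inclusion $Q_fQ_{f'}\subseteq Q_{ff'}$ is exactly the paper's argument (a one-line application of Proposition \ref{cond}); your aside about $ab1_g$ is unnecessary, since membership in $Q_{ff'}$ requires nothing beyond the identity $\alpha_g(ab1_{g^{-1}})=(ff')(g)\,ab$ for all $g$. For the reverse inclusion your route differs mildly from the paper's, though both lean on Proposition \ref{ann}: the paper writes $c=\widetilde{ff'}(z)$ (via Proposition \ref{equal}), decomposes $z=\sum_i s_iv_i$ with $v_i\in Q_{f'}$ using the equality $Q_{f'}S=S$, and expands $\widetilde{ff'}(z)$, cancelling $f'(g)^{-1}f'(g)$ termwise to arrive at $c=\sum_i\widetilde{f}(s_i)v_i\in Q_fQ_{f'}$; you instead multiply $c$ by the resolution of the identity \eqref{equal1} written for $f'$, namely $c=\sum_i\widehat{f'}(x_i)\,\widetilde{(f')^{-1}}(y_i)\,c$, and check that each factor $\widetilde{(f')^{-1}}(y_i)\,c$ lies in $Q_f$. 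That last membership needs no fresh idempotent bookkeeping and no globalization: since $Z^1(G,\alpha,S)$ is a group and Proposition \ref{equal} gives $\operatorname{Im}\widetilde{(f')^{-1}}=Q_{(f')^{-1}}$, it is just the already-proved easy inclusion applied to the cocycles $(f')^{-1}$ and $ff'$, i.e. $Q_{(f')^{-1}}Q_{ff'}\subseteq Q_{(f')^{-1}ff'}=Q_f$ (the factors $f'(g)^{-1}$ and $f(g)f'(g)$ collapse to $f(g)1_g=f(g)$, exactly as you predicted). In terms of trade-offs, the paper's proof is a short self-contained computation that only uses $Q_{f'}S=S$ and the surjectivity of $\widetilde{ff'}$ onto $Q_{ff'}$, while yours recycles the easy inclusion and so avoids any new cocycle manipulation, at the cost of invoking the finer identity \eqref{equal1}; both ingredients are supplied by Proposition \ref{ann}, so the two arguments are of essentially equal strength and length, and your fallback via the globalization is not needed.
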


\begin{proof}  The fact that    $Q_fQ_g\subseteq Q_{fg}$ follows from Proposition \ref{cond}.
For the other inclusion, if $c=\widetilde{fg}(z), z\in S$, by Proposition \ref{ann} we have that $z=\sum_is_iv_i, $ for some  $s_i\in S$ and $ v_i\in Q_{f'}.$
Then
\begin{align*}
c=\widetilde{fg}(z)&=\sum_{g, i}f(g)^{-1}f'(g)^{-1}\alpha_{g}(s_i1_{g^{-1}})\alpha_{g}(v_i1_{g^{-1}})\\
&\stackrel{Prop. \ref{cond}}=\sum_{g, i}f(g)^{-1}f'(g)^{-1}\alpha_{g}(s_i1_{g^{-1}})f'(g)v_i\\
&=\sum_{g, i}f(g)^{-1}\alpha_{g}(s_i1_{g^{-1}})v_i\\
&=\sum_i\widetilde{f}(s_i)v_i.
\end{align*}
Note that, $\widetilde{f}(s_i)\in Q_f$ and $v_i\in Q_g$ and thus $c\in Q_fQ_g$.
\end{proof}

\begin{prop}
Let $f,g\in Z^1(G,\alpha,S)$. Then $Q_f\otimes_R Q_{f^{-1}}=R\xi\simeq R$ as $R$-modules, where $\xi=\sum\limits_{i=1}^m\widehat{f}(x_i)\otimes_R \widetilde{f^{-1}}(y_i)$.
\end{prop}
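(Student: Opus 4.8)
The plan is to construct an explicit inverse to the multiplication map and to identify its generator as $\xi$. First I would consider the $R$-linear map $\mu\colon Q_f\otimes_R Q_{f^{-1}}\to S$, $a\otimes_R b\mapsto ab$, which is well defined because $(a,b)\mapsto ab$ is $R$-bilinear ($S$ is commutative and $R$ is a subring). Its image is $Q_fQ_{f^{-1}}$, which by Proposition \ref{6} equals $Q_{ff^{-1}}$; since $ff^{-1}$ is the identity $e_p$ of the group $Z^1(G,\alpha,S)$ (the identity of $C^1(G,\alpha,S)$ is $g\mapsto 1_g$), Proposition \ref{cond} gives $Q_{ff^{-1}}=Q_{e_p}=R$. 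So $\mu$ is a surjection onto $R$. Moreover $\widehat f(x_i)\in Q_f=\mathrm{Im}(\widehat f)$ and, by Proposition \ref{equal}, $\widetilde{f^{-1}}(y_i)\in\mathrm{Im}(\widetilde{f^{-1}})=Q_{f^{-1}}$, so $\xi=\sum_{i=1}^m\widehat f(x_i)\otimes_R\widetilde{f^{-1}}(y_i)$ genuinely lies in $Q_f\otimes_R Q_{f^{-1}}$, and equation \eqref{equal1} reads precisely $\mu(\xi)=1$.

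The heart of the proof is the identity $\tau=\mu(\tau)\,\xi$ for every $\tau\in Q_f\otimes_R Q_{f^{-1}}$. To establish it I would write $\tau=\sum_j a_j\otimes_R b_j$ with $a_j\in Q_f$ and $b_j\in Q_{f^{-1}}$, and insert in front of each $a_j$ the ``partition of unity'' $1=\sum_i\widehat f(x_i)\,\widetilde{f^{-1}}(y_i)$ supplied by \eqref{equal1}. Each product $\widetilde{f^{-1}}(y_i)\,a_j$ lies in $Q_{f^{-1}}Q_f=Q_{f^{-1}f}=Q_{e_p}=R$ by Proposition \ref{6}, hence is a scalar that can be moved across the tensor sign; regrouping the resulting double sum yields
\[
\tau=\sum_{i=1}^m\widehat f(x_i)\otimes_R\widetilde{f^{-1}}(y_i)\Bigl(\sum_j a_j b_j\Bigr)=\sum_{i=1}^m\widehat f(x_i)\otimes_R\widetilde{f^{-1}}(y_i)\,\mu(\tau)=\mu(\tau)\,\xi,
\]
where in the last step $\mu(\tau)\in R$ is pulled out of the $R$-tensor.

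From this the proposition follows formally. The $R$-linear map $\varphi\colon R\to Q_f\otimes_R Q_{f^{-1}}$, $r\mapsto r\xi$, is surjective because $\tau=\varphi(\mu(\tau))$ for all $\tau$, and $\mu\circ\varphi=\mathrm{id}_R$ since $\mu(r\xi)=r\,\mu(\xi)=r$ by \eqref{equal1}; hence $\varphi$ is also injective, so it is an isomorphism, and $Q_f\otimes_R Q_{f^{-1}}=\mathrm{Im}(\varphi)=R\xi\simeq R$.

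The only step that requires genuine care is the bookkeeping in the middle paragraph: one must verify that the elements $\widetilde{f^{-1}}(y_i)\,a_j$ and $\mu(\tau)$ really lie in $R$ before transporting them through $\otimes_R$, and that the reindexing of the double sum is consistent; everything else is a formal consequence of Propositions \ref{6} and \ref{equal} together with $\mu(\xi)=1$. (Alternatively one could invoke the general fact that an invertible $R$-submodule $X$ of a commutative ring with $XY=R$ satisfies $X\otimes_R Y\cong R$ via multiplication, but the computation above is self-contained and, in addition, exhibits $\xi$ as the canonical generator.)
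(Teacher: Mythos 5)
Your proof is correct, and its computational heart is the same trick the paper uses: insert the partition of unity $1=\sum_{i}\widehat f(x_i)\widetilde{f^{-1}}(y_i)$ from \eqref{equal1} and pull the resulting factors, which lie in $Q_{f^{-1}}Q_f=Q_{f^{-1}f}=R$ by Propositions \ref{6} and \ref{cond}, across $\otimes_R$. Where you differ is in how this is packaged. The paper first argues that the partial Galois coordinates generate $S$ as an $R$-module, hence that the elements $\widehat f(x_i)\otimes_R\widetilde{f^{-1}}(y_j)$ generate $Q_f\otimes_R Q_{f^{-1}}$, and then runs the computation only on these generators to get $Q_f\otimes_R Q_{f^{-1}}=R\xi$; the isomorphism $R\xi\simeq R$ is then obtained indirectly, by observing that $R\xi$ is faithful as a tensor product of faithfully projective modules and hence free of rank one. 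You instead run the computation on an arbitrary element $\tau$, obtaining the identity $\tau=\mu(\tau)\xi$ with $\mu$ the multiplication map, which makes the generation argument unnecessary and simultaneously produces an explicit splitting $\mu\circ\varphi=\mathrm{id}_R$ for $\varphi\colon r\mapsto r\xi$; injectivity and surjectivity of $\varphi$ then follow formally, with no appeal to faithfulness or rank. Your route is therefore a bit more self-contained and, as a bonus, shows directly that the multiplication map $Q_f\otimes_R Q_{f^{-1}}\to Q_{ff^{-1}}=R$ is itself an isomorphism, which is the $g=f^{-1}$ case of Theorem \ref{iso}; the paper's route, on the other hand, exhibits the generating set coming from the Galois coordinates, which it reuses elsewhere. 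One small point of bookkeeping: your verification that $\xi$ and the scalars $\widetilde{f^{-1}}(y_i)a_j$, $\mu(\tau)$ really lie where you claim (via Propositions \ref{equal}, \ref{6} and \ref{cond}) is exactly the care the argument needs, so nothing is missing.
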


\begin{proof} Note that $R\xi\subseteq Q_f\otimes_R Q_{f^{-1}}$. For the other inclusion, as $S$ is a partial Galois extension from $R$, then the partial Galois coordinates $x_1,\dots , x_m; y_1,\dots, y_m$ generate the $R$-module $S$. Thus, $\widehat{f}(x_1),\dots, \widehat{f}(x_n)$ and $\wt{f\m}(y_1),\dots, \wt{f\m}(y_n)$ generate  the $S$-modules $Q_f$ and $Q_{f^{-1}}$ respectively, and then   $Q_f\otimes_R Q_{f^{-1}}$ is generated by $\{\widehat{f}(x_i)\otimes_R \widetilde{f^{-1}}(y_j)\mid 1\leq i,j\leq n\}.$  Hence it is enough to prove that $\widehat{f}(x_i)\otimes_R \widetilde{f^{-1}}(y_j)\in R\xi,$ for  all $1\leq i,j\leq n$. 
Since $ff\m(g)=1_g,$ for each $g\in G,$ we get by  Proposition \ref{6} and Proposition \ref{free} that \begin{equation}\label{inv}Q_fQ_{f^{-1}}=Q_{ff^{-1}}=R,\end{equation}  hence
\begin{align*}
\widehat{f}(x_i)\otimes_R \widetilde{f^{-1}}(y_j)&\stackrel{\eqref{equal1}}=\widehat{f}(x_i)\otimes_R \sum_{k=1}^m\widetilde{f^{-1}}(y_j)\widehat{f}(x_k)\widetilde{f^{-1}}(y_k)\\
&=\sum_{k=1}^m \widehat{f}(x_i)\widetilde{f^{-1}}(y_j)\widehat{f}(x_k)\otimes_R \widetilde{f^{-1}}(y_k)\\
&= \widehat{f}(x_i)\widetilde{f^{-1}}(y_j)\left(\sum_{k=1}^m\widehat{f}(x_k)\otimes_R \widetilde{f^{-1}}(y_k)\right)\\
&=\widehat{f}(x_i)\widetilde{f^{-1}}(y_j)\xi.
\end{align*}

The module $R\xi$ is faithful,  as the tensor product of two faithfully projective modules, and thus free of rank one, that is $R\xi\simeq R.$
\end{proof}
\begin{rem}\label{r1}  It follows from the equality \eqref{inv}  that $Q_f \in {\rm Inv}_R(S),$   for all $f\in Z^1(G,\alpha, S).$  Then $[Q_f] \in  {\textbf{ Pic}}(R),$ and thus $Q_{f^{-1}}$ is isomorphic with the $R$-module $Q_f^*=Hom_R(Q_f,R)$. 
\end{rem}
Using the fact that $Q_f$ is faithfully projective and the method of localization we get that.

\begin{prop}\label{8}
Let $M$ be an $R$-submodule of $S$, $f\in Z^1(G,\alpha,S).$ Then the $R$-homomorphism $\varphi:Q_f\otimes_R M\to Q_fM$, defined by $a\otimes x\mapsto ax$ ($a\in Q_f, \, x\in M$) is an isomorphism. In particular, the map  $Z^1(G,\alpha,S)\ni f\mapsto [Q_f]\in {\bf Pic}(R)$ is a group homomorphism.
\end{prop}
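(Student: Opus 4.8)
The plan is to establish the isomorphism $\varphi\colon Q_f\otimes_R M\to Q_fM$ first, and then derive the claim about the Picard group homomorphism as an application. For the first part, surjectivity of $\varphi$ is immediate since $Q_fM$ is by definition the set of finite sums of products $ax$ with $a\in Q_f$, $x\in M$. For injectivity, I would exploit that $Q_f$ is faithfully projective of rank one (Corollary \ref{faithful} together with Remark \ref{r1}), so in particular it is flat; hence $Q_f\otimes_R M\to Q_f\otimes_R S$ is injective because $M\hookrightarrow S$, and it suffices to show that the composite $Q_f\otimes_R M\to Q_f\otimes_R S\to S$, $a\otimes x\mapsto ax$, is injective on the image of $Q_f\otimes_R M$. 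Equivalently, using that $Q_f\in{\rm Inv}_R(S)$ with inverse $Q_{f^{-1}}$ and the explicit partial Galois coordinate identity \eqref{equal1}, I would construct an explicit left inverse to $\varphi$: send $s\in Q_fM$ (viewed inside $S$) to $\sum_{i=1}^m \widehat f(x_i)\otimes_R \bigl(\widetilde{f^{-1}}(y_i)\,s\bigr)$, noting that $\widetilde{f^{-1}}(y_i)s\in Q_{f^{-1}}Q_fM=RM=M$ by \eqref{inv} and Proposition \ref{6}. A short computation using \eqref{equal1} shows this map composed with $\varphi$ is the identity on $Q_fM$, and composed the other way with $\varphi$ recovers the identity on $Q_f\otimes_R M$ after using that every element of $Q_f\otimes_R M$ can be rewritten via the generators $\widehat f(x_i)$ of $Q_f$; this forces $\varphi$ to be an isomorphism.

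For the ``in particular'' statement, I would argue as follows. By Remark \ref{r1} we already know $[Q_f]\in\mathbf{Pic}(R)$ for every $f\in Z^1(G,\alpha,S)$, so the map $f\mapsto[Q_f]$ is well defined into $\mathbf{Pic}(R)$. To see it is a homomorphism, apply the just-proved isomorphism with $M=Q_{f'}$: it gives $Q_f\otimes_R Q_{f'}\cong Q_fQ_{f'}$, and by Proposition \ref{6} we have $Q_fQ_{f'}=Q_{ff'}$. Hence $[Q_f][Q_{f'}]=[Q_f\otimes_R Q_{f'}]=[Q_{ff'}]$, which is exactly multiplicativity. The identity element is handled by Proposition \ref{cond}, which gives $Q_{e_p}=R$, so $e_p\mapsto[R]$, the identity of $\mathbf{Pic}(R)$; and inverses follow since $[Q_f][Q_{f^{-1}}]=[Q_{ff^{-1}}]=[R]$.

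The main obstacle I anticipate is the injectivity of $\varphi$: one must be careful that $Q_fM$ is being formed inside $S$ (so products of elements of $Q_f$ and $M$ make literal sense), and that the candidate inverse actually lands in $Q_f\otimes_R M$ rather than $Q_f\otimes_R S$ — this is where the containment $\widetilde{f^{-1}}(y_i)\,(Q_fM)\subseteq Q_{f^{-1}}Q_fM = M$ is essential, relying on \eqref{inv}. The flatness argument and the explicit left inverse are somewhat redundant with each other; I would present the explicit left-inverse computation as the cleaner route, since it only uses results already available in the excerpt (Propositions \ref{ann}, \ref{6}, \ref{free} and the coordinate identities), and mention flatness only as motivation. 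Everything else is a routine bookkeeping of the coordinate identity \eqref{equal1} and the definitions of $\widehat f$ and $\widetilde{f^{-1}}$.
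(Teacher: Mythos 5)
Your argument is correct, but it takes a different route from the paper. The paper disposes of this proposition in one line: since $Q_f$ is faithfully projective of rank one, one localizes at each prime of $R$, where $Q_f$ becomes free of rank one with a generator invertible in the localized $S$ (because $Q_f\in{\rm Inv}_R(S)$), so the multiplication map is locally, hence globally, an isomorphism; no explicit inverse is written down. You instead build a two-sided inverse $s\mapsto\sum_{i=1}^m\widehat f(x_i)\otimes\bigl(\widetilde{f^{-1}}(y_i)\,s\bigr)$ from the coordinate identity \eqref{equal1} and the relation $Q_{f^{-1}}Q_f=R$ of \eqref{inv}; this is sound, and it is in fact the same device the paper itself uses one proposition earlier to prove $Q_f\otimes_R Q_{f^{-1}}=R\xi$. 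What each approach buys: yours is self-contained, avoids localization, and produces a concrete inverse; the paper's is shorter but leans on standard facts about rank-one projectives. One presentational suggestion: for the composite $\psi\circ\varphi$ on an elementary tensor $a\otimes x$, the cleanest verification is to note $\widetilde{f^{-1}}(y_i)a\in Q_{f^{-1}}Q_f=R$ and pull this scalar across $\otimes_R$, giving $\sum_i\widehat f(x_i)\widetilde{f^{-1}}(y_i)a\otimes x=a\otimes x$ directly; invoking that the $\widehat f(x_i)$ generate $Q_f$ is unnecessary (and that generation claim would itself need the standard partial trace formula), though this is a matter of exposition rather than a gap. Your derivation of the ``in particular'' statement (take $M=Q_{f'}$, combine with Proposition \ref{6}, identity via $Q_{e_p}=R$, inverses via $Q_{f^{-1}}$) coincides with how the paper uses Propositions \ref{8} and \ref{6} to get Theorem \ref{iso} and the homomorphism $\lambda$.
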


The following result is a consequence of  Propositions \ref{8} and \ref{6}.

\begin{thm}\label{iso}
Let $f,g$ be one-dimensional cocycles of $Z^1(G,\alpha,S)$. Then the $R$-homomorphism $\psi:Q_f\otimes_R Q_g\to Q_{fg}$ defined by $a\otimes b\mapsto ab$ for all $a\in Q_f,\, b\in Q_g$ is an isomorphism.
\end{thm}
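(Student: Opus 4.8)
The statement packages together two facts already essentially at hand. From Proposition \ref{6} we know $Q_fQ_g = Q_{fg}$ as $R$-submodules of $S$, and from Proposition \ref{8} (applied with $M=Q_g$, which is a legitimate $R$-submodule of $S$) we know the multiplication map $\varphi\colon Q_f\otimes_R Q_g\to Q_fQ_g$, $a\otimes b\mapsto ab$, is an $R$-module isomorphism. The plan is therefore simply to compose: the map $\psi$ in the statement is exactly $\varphi$ followed by the identification $Q_fQ_g = Q_{fg}$ furnished by Proposition \ref{6}. So I would write: ``By Proposition \ref{6} we have $Q_fQ_g=Q_{fg}$, and by Proposition \ref{8} the multiplication map $Q_f\otimes_R Q_g\to Q_fQ_g$ is an $R$-isomorphism; composing these gives the desired isomorphism $\psi\colon Q_f\otimes_R Q_g\to Q_{fg}$, $a\otimes b\mapsto ab$.'' That is the whole argument.

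The only point worth a sentence of care is that $\psi$ is well-defined as written — i.e. that the bilinear map $(a,b)\mapsto ab$ lands in $Q_{fg}$ and not merely in $S$. This is immediate from Proposition \ref{cond}: if $\alpha_g(a1_{g^{-1}})=f(g)a$ and $\alpha_g(b1_{g^{-1}})=g(g)b$ for all characters... rather, for all group elements $g$, then $\alpha_g(ab1_{g^{-1}}) = \alpha_g(a1_{g^{-1}})\alpha_g(b1_{g^{-1}}) = f(g)g(g)\,ab = (fg)(g)\,ab$ (using that $S$ is commutative and that $\alpha_g$ is a ring homomorphism on $S_{g^{-1}}$), so $ab\in Q_{fg}$. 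This inclusion $Q_fQ_g\subseteq Q_{fg}$ is the easy half of Proposition \ref{6} anyway, so one may just cite it.

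I do not anticipate a genuine obstacle here; the theorem is a corollary in all but name, which is presumably why the authors phrase the preceding sentence as ``The following result is a consequence of Propositions \ref{8} and \ref{6}.'' The one thing to double-check when writing it up is a notational clash: the symbol $g$ is used both for a cocycle (the second argument) and, implicitly in the cocycle condition of Proposition \ref{cond}, for a group element. In the write-up I would either rename the group element or trust the reader, but I would make sure the composite isomorphism is presented as the cited consequence rather than re-deriving $Q_fQ_g=Q_{fg}$ from scratch. Concretely, the proof reads:

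\begin{proof}
By Proposition \ref{8} with $M=Q_g$, the $R$-homomorphism $\varphi\colon Q_f\otimes_R Q_g\to Q_fQ_g$ given by $a\otimes b\mapsto ab$ is an isomorphism. By Proposition \ref{6} we have $Q_fQ_g=Q_{fg}$. Hence $\psi=\varphi$, viewed as a map into $Q_{fg}$, is the required $R$-module isomorphism.
\end{proof}
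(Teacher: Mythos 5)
Your argument is exactly the paper's: Theorem \ref{iso} is stated there as a consequence of Propositions \ref{8} and \ref{6}, and your proof composes the multiplication isomorphism $Q_f\otimes_R Q_g\to Q_fQ_g$ from Proposition \ref{8} (with $M=Q_g$) with the equality $Q_fQ_g=Q_{fg}$ from Proposition \ref{6}, plus the easy well-definedness check via Proposition \ref{cond}. This is correct and matches the intended proof.
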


\vspace{0.3cm}

By Propostion \ref{free}  and Theorem \ref{iso} we have  a group homomorphism $$\lambda : H^1(G,\alpha,S) \ni  {\rm cls}(f) \mapsto [Q_f] \in \textbf{Pic}(R).$$ 
We finish this section by giving a relation between $\lambda$ and the monomorphism $\varphi_1: H^1(G,\alpha,S) \to \textbf{Pic}(R)$ which is to the head  of the seven-terms exact sequence  related to partial Galois extension of commutative rings (see \cite{DPP, DPPR}). The map $\varphi_1$  sends ${\rm cls}(f)$ to the  $R$-isomorphism class   $[S_f^G],$ where 
$$S_f^G=\{x\in S\mid f(g)\af_g(a1_{g\m})=a1_g, \forall g\in G \}.$$ 
Therefore   $S_f^G= Q_{f\m}$ thanks to Proposition \ref{cond}, and it follows that
$\lambda({\rm cls}(f) )=\varphi_1({\rm cls}(f\m))$ and  we conclude that $\lambda$ is a monomorphism.

\section{Partial actions and Kummer Extensions}

In this section we present a partial Kummer theory, one of the main results in this section is  that any partial  $n$-kummerian ring extension is a  sum  of  invertible modules., induced by one dimensional cocycles. 

First, we recall the following.


\begin{defn}\label{kummer}Let  $n\geq 2$ be a natural number. 
 A commutative ring $R$ is called $n$-kummerian if there exists an element  $\omega\in \mathcal{U}(R),$ such that:
\begin{itemize}
\item [a)] $\om^n=1.$
\item [b)] $1-\om^i\in\mathcal{U}(R), $ for all $i\in \{1, \cdots, n-1\}.$
\end{itemize}
\end{defn}


\begin{rem}\label{K}
In  $R$ we have that:
$$\sum\limits_{i=0}^{n-1}\om^i=0\hspace{2cm}\text{and}\hspace{2cm} \prod\limits_{i=1}^{n-1}(1-\om^i)=n1_R.$$
\end{rem}
 Let $R$ be  a $n$-kummerian ring and $\om$ as in Definition \ref{kummer},   any group homomorphism $\chi: G\to \langle \om \rangle$ is called a character of the group $G$. Let $\hat G=Hom(G,\langle \om \rangle)$ the set of all characters of $G$ in $\langle \om \rangle$. We define a group structure on $\hat G$ as follows: For  $\chi_1,\chi_2\in\hat G$, their product $\chi_1\chi_2$ is defined by 
$$(\chi_1\chi_2)(g)=\chi_1(g)\chi_2(g), g\in G.$$
 With this product $\hat G$ is a group isomorphic to  $G$. 
 
 \begin{defn}\label{parkum}
The partial Galois extension $S\supseteq R$ of $G$  is called partial  $n$-kummerian if $G$ is an abelian Galois group of order $n$ and $R$ is an $n$-kummerian ring.
\end{defn}


For $\chi\in \hat G$ we set
\begin{equation}\label{chip}
\chi_p: G\ni g\mapsto \chi(g)1_g\in S.
\end{equation}

\begin{rem}Let $\om\in R$ be as in Definiton \ref{kummer} and $\Om$ be the cyclic group generated by $\om.$ Note that $ Im \,\chi_p\subseteq \bigcup_{g\in G}\Om1_g$ which  is an inverse semigroup.
\end{rem}
\begin{rem}\label{kglob} Let   $(T, \beta)$ be  a  globalization of $(S, \alpha)$ and  $\psi$ be  the ring isomorphism given in Remark \ref{isogal}. Then $\om'=\psi(\om)\in T^G$ satisfies a) and b) in Definition \ref{kummer}, in particular    $T\supseteq T^G$ is a $n$-kummerian ring extension. Moreover for any $\chi\in \hat G,$ we have that $\psi_\chi:=\psi\circ \chi\in Hom(G, \langle \om' \rangle)$ and any element of $ Hom(G, \langle \om' \rangle)$ is of this form.
\end{rem}


We have the following.
\begin{lem}\label{summa} Let $S\supseteq R$  be a partial  $n$-kummerian ring extension. Then
\begin{enumerate}
\item For any $g\in G, g\neq e,$  we have
 \begin{equation}\label{equal0}\sum\limits_{\chi \in \hat G}\chi(g)=0,
\end{equation} 
\item The set $\gpar=\{\chi_p \mid \chi \in \hat G\}$ is a subgroup of  $Z^1(G,\alpha,S).$ Moreover 
\begin{enumerate}
\item The map  $\mu_p=\hat G\ni \chi \mapsto \chi_p\in \gpar$ is a group epimorphism  with $\ker \mu_p=\{\chi\in\hat G\mid \chi(g)=1 \text{   if   }\, 1_g\neq 0 \}$. In particular, if $1_g\neq 0$ for all $g\in G$ the groups $\hat G$ and $\gpar$ are isomorphic.
\item  The map $\hat G_{\rm par}\ni \chi_p\mapsto Q_{\chi_p}\in{\rm Inv}_R(S)$, is a group homomorphism.
\end{enumerate}

\item   One has \begin{equation}\label{suma} S=\sum\limits_{\chi\in\hat G} Q_{\chi_p},\end{equation} and
the sum in \eqref{suma} is direct if and only if $S\supseteq R$ is a (global) Galois extension.

\item For for any $\chi\in \hat G$ we have $Q_{\psi_\chi}1_S \subseteq 
Q_{\chi_p}.$ 
\end{enumerate}
\end{lem}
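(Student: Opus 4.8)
The plan is to establish the four items in order, each building on the preceding material.

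For item (1), I would exploit the fact that $\hat G$ is a finite abelian group isomorphic to $G$, so for a fixed $g\neq e$ the map $\hat G\ni\chi\mapsto\chi(g)\in\langle\om\rangle$ is a group homomorphism. If this homomorphism is trivial, then $\chi(g)=1$ for all $\chi$, which by the nondegeneracy of the pairing between $G$ and $\hat G$ would force $g=e$, a contradiction; hence there exists $\chi_0$ with $\chi_0(g)=\om^j$ for some $1\le j\le n-1$, a non-trivial $n$-th root of unity. Then $\sum_{\chi}\chi(g)=\sum_{\chi}(\chi_0\chi)(g)=\chi_0(g)\sum_{\chi}\chi(g)$, so $(1-\chi_0(g))\sum_\chi\chi(g)=0$; since $1-\om^j\in\mathcal U(R)$ by Definition \ref{kummer}(b), we conclude $\sum_\chi\chi(g)=0$.

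For item (2), I would first check that each $\chi_p$ lies in $Z^1(G,\alpha,S)$: using the defining formula $\chi_p(g)=\chi(g)1_g$ and the cocycle identity description $f(gh)1_g=f(g)\alpha_g(f(h)1_{g^{-1}})$ from the Example after Definition of $Z^1$, this reduces to the identity $\chi(gh)1_{gh}1_g=\chi(g)1_g\alpha_g(\chi(h)1_h1_{g^{-1}})=\chi(g)\chi(h)1_g\alpha_g(1_h1_{g^{-1}})$, which holds because $\chi$ is a homomorphism and $\alpha_g(1_h1_{g^{-1}})=1_g1_{gh}$ by axiom (ii) of a partial action. That $\mu_p$ is a homomorphism is immediate from the pointwise product on both $\hat G$ and $Z^1$, so $\gpar=\operatorname{Im}\mu_p$ is a subgroup; the kernel computation is direct from $\chi_p=e_p$ iff $\chi(g)1_g=1_g$ for all $g$, i.e. $\chi(g)=1$ whenever $1_g\neq 0$. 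Part (b) is then Proposition \ref{8} restricted to $\gpar$. For item (4), given $\chi\in\hat G$ and the globalization $(T,\beta)$, I would use Proposition \ref{cond}: $Q_{\psi_\chi}=\{t\in T\mid\beta_g(t)=\psi_\chi(g)t\}$ in the global setting, and for such $t$ the element $t1_S$ satisfies $\alpha_g(t1_S1_{g^{-1}})=\beta_g(t)1_g=\psi_\chi(g)t1_g=\chi(g)1_g\cdot t1_S$ (using that $\psi(\om)=\om'$ restricts correctly under multiplication by $1_S$), hence $t1_S\in Q_{\chi_p}$.

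The main obstacle will be item (3), the decomposition $S=\sum_{\chi\in\hat G}Q_{\chi_p}$ together with the directness criterion. For the equality, the natural approach is a partition-of-unity / idempotent argument: for each $\chi$ consider the operator $\widehat{\chi_p}$ or $\widetilde{\chi_p}$ from Section \ref{IMCC}, and show that $\frac{1}{?}\sum_{\chi}\widetilde{\chi_p}$ — or more precisely a suitably normalized sum using Remark \ref{K} (where $\prod(1-\om^i)=n1_R$ and $\sum\om^i=0$ play the role of invertibility of $|G|$) — acts as the identity on $S$, so that every $s\in S$ is a sum of its "$\chi$-components" $\widetilde{\chi_p}(s)\in Q_{\chi_p}$; here one must combine the trace formula \eqref{tr1} with item (1) to collapse the double sum. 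For directness: if the sum is direct then comparing ranks (each $Q_{\chi_p}$ is invertible of rank one by Remark \ref{r1}, and there are $n=|G|$ of them) with $S$ being faithfully projective of rank $n$ over $R$ in the global case forces the partial action to be global, or conversely one shows a nonzero idempotent $1_g\neq 1_S$ produces a nontrivial relation among the $Q_{\chi_p}$ via item (2a)'s kernel; in the global direction, the classical Kummer/Galois descent argument (as in \cite[Section 2]{B}) gives the direct sum. I would lean on Remark \ref{isogal} to transport the direct-sum statement to the globalization $T\supseteq T^G$, where it is Borevich's classical result, and then argue that equality of $\sum Q_{\chi_p}$ with a direct sum pulls back to $1_S=1_T$, i.e. $\alpha$ global.
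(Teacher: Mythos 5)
Most of your proposal is sound and, for items (2), (4) and the equality in (3), follows essentially the paper's own route: the cocycle verification for $\chi_p$, the kernel computation (which, note, does use $1-\om^i\in\mathcal{U}(R)$ to pass from $1_g(1-\chi(g))=0$ to ``$1_g=0$ or $\chi(g)=1$''), the multiplicativity $\chi_p\mapsto Q_{\chi_p}$ (the paper cites Proposition \ref{6} and Remark \ref{r1} rather than Proposition \ref{8}, but the content is the same), and for (4) exactly the computation $\alpha_g(b1_S1_{g\m})=\beta_g(b)1_g=\chi(g)1_gb1_S$ plus Proposition \ref{cond}. The normalization you left as ``$\tfrac1?$'' is $n\m$, with $n1_R\in\mathcal U(R)$ by Remark \ref{K}, and if you work with $\wt{\chi}_p$ (as the paper does) rather than $\widehat{\chi}_p$ the trace element $w$ is not needed: the identity $n\m\sum_{\chi}\wt{\chi}_p={\rm id}_S$ follows from item (1) alone. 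Your item (1) is a genuinely different and more elementary argument than the paper's: the paper transports the statement to the globalization and quotes Borevich's proof, whereas you prove it directly in $R$ by the translation trick $(1-\chi_0(g))\sum_\chi\chi(g)=0$ together with the invertibility of $1-\om^j$; this works because $\om$ has exact order $n$, so the pairing $G\times\hat G\to\langle\om\rangle$ is nondegenerate, and it avoids the globalization entirely.

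The genuine gap is the directness criterion in item (3). For ``direct $\Rightarrow$ global'' you only assert that the rank count ``forces the partial action to be global''; the missing ingredient is the fact that a partial Galois extension satisfies ${\rm rk}_R(S)=|G|$ if and only if the action is global, which is \cite[Corollary 4.6]{DFP} and is precisely what the paper invokes after observing (Remark \ref{r1}) that each $Q_{\chi_p}$ has rank one, so a direct sum over all $n$ characters gives ${\rm rk}_R(S)=n$. Without this (or an equivalent argument comparing local ranks in $S\otimes S\simeq\prod_{g\in G}S_g$ to conclude $S_g=S$ for all $g$), the rank comparison does not close the argument. Your alternative suggestion --- that a nonzero idempotent $1_g\neq 1_S$ produces a relation among the $Q_{\chi_p}$ ``via item (2a)'s kernel'' --- would fail: the kernel of $\mu_p$ can be trivial for genuinely partial actions (in Example \ref{e1}, taken from \cite[Example 6.1]{DFP}, all $1_g\neq 0$, yet $\alpha$ is not global and the full sum $\sum_{\chi}Q_{\chi_p}$ is indeed not direct). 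Finally, for ``global $\Rightarrow$ direct'' no detour through the globalization is needed: if $\alpha$ is global then $\chi_p=\chi$ and the directness is exactly \cite[Section 3, Theorem 1]{B}, which is how the paper argues.
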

\proof  1)  By Remark \ref{kglob} and the proof of \cite[Theorem 1]{B}   Section 3, we get $\sum\limits_{\chi \in \hat G}\psi
_\chi(g)=0,
$ for any $g\in G, g\neq e,$ the fact that $\psi$ is a ring isomorphism    implies the result. 

  \noindent 2) It is clear that $\gpar$ is a group. Now take $g,h\in G$ and $\chi_p\in \gpar,$ then
$$
\chi_p (gh)1_g=\chi(g)1_g\chi(h)1_{gh}=\chi(g)1_g\alpha_g (\chi(h)1_h1_{g^{-1}}) 
=\chi_p (g)\alpha_g(\chi_p (h)1_{g^{-1}}),
$$ and $\chi_p\in Z^1(G,\alpha,S).$   Now for part a) is clear that the map is an epimorphism. Now since $R$ is $n$-kummerian  we have
\begin{align*} \chi\in \ker \mu_p &\Longleftrightarrow \chi(g)1_g=1_g, \forall g\in
 G
\\&\Longleftrightarrow 1_g(1-\chi(g))=0,  \forall g\in
 G
\\&\Longleftrightarrow 1_g= 0 \vee \chi(g)=1,  \forall g\in
 G
\end{align*}  Finally, part b) is a consequence of Proposition \ref{6} and Remark \ref{r1}.

  \noindent 3)  Let $x\in S.$ Then
\begin{align*}
\sum_{\chi_p}\wt{\chi}_p(x)&=\sum_{\chi_p,g}\chi^{-1}_p(g)\alpha_g( x1_{g^{-1}})
\\&=\sum_{\chi,g}\chi^{-1}(g)\alpha_g( x1_{g^{-1}})\\
&=\sum_g\left(\sum_{\chi}\chi^{-1}(g)\alpha_g(x1_{g^{-1}})\right)=
\\
&= \sum_g\left(\alpha_g(x1_{g^{-1}})\sum_{\chi}\chi^{-1}(g)\right)\\
&\stackrel{\eqref{equal0}}=
nx,
\end{align*}
and we get that  $nx\in \sum\limits_{\chi_p} Im \wt{\chi}_p=\sum\limits_{\chi_p} Q_{\chi_p},$ where the last equality follows from Proposition \ref{equal}. Since $(n1_R)\m \in R$ and each $Q_{\chi_p}$ is an $R$-module we get  $x\in\sum\limits_{\chi_p}Q_{\chi_p}, $ and $S=\sum\limits_{\chi\in\hat G} Q_{\chi_p}.$  

\noindent Now, if $S/R$ is a Galois extension, then $\chi_p=\chi,$ for all $\chi \in \hat G$ and  the sum in \eqref{suma} is direct thanks to \cite[Section 3, Theorem 1]{B}. Conversely,  by  Remark \ref{r1} we have that ${\rm rk}(Q_{\chi_p})=1,$ for all $\chi \in \hat G.$ Then 
${\rm rk} (S)={\rm rk}\left(\bigoplus\limits_{\chi_p\in \hat G} Q_{\chi_p}\right )=n,$ and the result follows from \cite[Corollary 4.6]{DFP}.

\noindent 4)  For $b\in Q_{\psi_\chi} ,$  then 
\begin{align*}
\af_g((b1_S) 1_{g\m})&=\bt_g(b)\bt_g(1_S)1_g=\bt_g(b)1_g=\psi_\chi(g)b1_g
=\chi(g)b1_g=\chi_p(g)(b1_S),
\end{align*}
and we get $b1_S\in Q_{\chi_p}$ thanks to Proposition \ref{cond}.\endproof

\begin{rem} It follows by equation \eqref{suma} and Proposition \ref{6} that $S$ is a strong $\hat G$-system (see \cite[Definition16]{NY}). Moreover, by 4) of Lemma \ref{summa} follows that the map $T=\bigoplus\limits_{\chi \in \hat G}Q_{\psi_\chi} \ni t\mapsto t1_S\in \sum\limits_{\chi\in\hat G} Q_{\chi_p}=S$ is a ring epimorphism that preserves the homogeneous components.
\end{rem}



 
\subsection{On Borevich's  radical extensions}\label{Brad} 

Here we recall the notion of  Borevich's  radical extensions .

 Let $R$ be a commutative ring. For a non-zero $R$-module $Q$ and a natural number $i$ we denote $Q^{{\otimes} ^i}=\underbrace{Q\otimes \cdots \otimes Q}_{i-times},$ where  $Q^{{\otimes} ^0}=R.$  Suppose that there is $m\in \N$ and an $R$-module homomorphism $\varphi\colon Q^{{\otimes} ^m}\to R.$ Let $S_{Q, \varphi}=\bigoplus\limits_{i=0}^{m-1}Q^{{\otimes} ^i}.$ We recall the  construction of a product in $S_{Q, \varphi}$. (This process is the    so-called factorization by $\varphi$ (see \cite[Section 5]{B} for details). \\

Consider the tensor $R$-algebra $R[Q]=\bigoplus\limits_{i=0}^\infty Q^{{\otimes} ^i},$ and define recursively the $R$-module homomorphism $\tilde \varphi: R[Q]\to S_{Q, \varphi}$ as follows: \\
For $x\in Q^{{\otimes} ^i}\subseteq  R[Q]$   we set 
$  \tilde\varphi(x)=x,$ if   $0\leq i\leq m-1.$
Now  if $x=a_1\otimes \cdots \otimes a_i,$ with $ i\geq m$ \text { and } $\tilde\varphi_{\mid  Q^{{\otimes} ^k}}$ is defined \text { for } $k<i,$ we set 
\begin{align*}\tilde \varphi(a_1\otimes \cdots \otimes a_i)&=\tilde\varphi( \varphi (a_1\otimes \cdots \otimes a_{m})\otimes a_{m+1}\otimes \cdots \otimes a_i)
\\&=\tilde\varphi( \varphi (a_1\otimes \cdots \otimes a_{m}) a_{m+1}\otimes \cdots \otimes a_i).  \end{align*} 
For elements $x,y\in S_{Q, \varphi}$  one defines $$x\bullet y=\tilde\varphi(x\ot y).$$ with this product $S_{Q, \varphi}$ is a commutative $R$-algebra containing $R$ as a unital  subring. Notice that, $S_{Q, \varphi}$ is   graded by the cyclic group $C_m,$ and is strongly graded in the case that $\varphi$ is an isomorphism.

 The $R$-algebra  $S_{Q, \varphi}$  constructed above  is called  a \textit{radical
extension of $R.$}
 
%


\begin{defn}\label{iradd} Let  $m\in \N$ and $I\subseteq \{0, \cdots, m-1\}.$ The $I$-radical extension of $R$ is the  $R$-submodule of $S_{Q, \varphi}$  given by  $S_{Q, \varphi, I}=\bigoplus\limits_{i\in I}Q^{{\otimes} ^i}.$   Moreover we say that $I$ is $m$-saturated if it is closed under the addition in $C_m.$ 
That is, $I$ is m-saturated, if and only if, I viewed as a subset of $C_m$  is a subgroup.
\end{defn}

Now we give a criteria to determine when $S_{Q, \varphi, I}$ is an $R$-algebra.

\begin{prop}\label{iradd} Let  $I\subseteq \{0, \cdots, m-1\}$ and suppose that the map $\varphi$ as above is an isomorphism. Then the following statements hold:
\begin{enumerate}
\item $S_{Q, \varphi, I}$ is a commutative $R$-subalgebra of $S_{Q, \varphi},$ if and only if, I is m-saturated. In this case  $S_{Q, \varphi, I}$ is a  I-graded $R$-algebra and ${\rm rk}_R(S_{Q, \varphi, I})$ divides $m.$
\item  Let $I\subseteq \{0,\cdots, m-1 \}$ be $m$-saturated and consider the $I$-radical extension $S_{Q, \varphi, I},$ then there exists a    f.g.p $R$-module $Q'$ with ${\rm rk}(Q')=1,$  and a  $R$-algebra isomorphism  $S_{Q, \varphi, I}\simeq S_{Q', \varphi'}.$  
\end{enumerate}
\end{prop}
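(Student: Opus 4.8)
The plan is to prove the two parts of Proposition \ref{iradd} by exploiting that $\varphi\colon Q^{\ot^m}\to R$ is an isomorphism, which forces all the graded pieces $Q^{\ot^i}$ to be f.g.p.\ of rank one and makes $S_{Q,\varphi}$ a strongly $C_m$-graded commutative $R$-algebra (as already noted after the construction). Throughout I will identify $I$ with a subset of $C_m=\Z/m\Z$ and use the multiplication rule $Q^{\ot^i}\bullet Q^{\ot^j}\subseteq Q^{\ot^{(i+j)\bmod m}}$, which is an \emph{equality} of $R$-modules because $\varphi$ is an isomorphism (strong grading).

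For part (1), I would argue as follows. If $S_{Q,\varphi,I}=\bigoplus_{i\in I}Q^{\ot^i}$ is a subalgebra, then $1_R\in Q^{\ot^0}$ forces $0\in I$, and for $i,j\in I$ the product $Q^{\ot^i}\bullet Q^{\ot^j}=Q^{\ot^{(i+j)\bmod m}}$ must lie inside $S_{Q,\varphi,I}$; since the decomposition into homogeneous components is direct and each $Q^{\ot^k}$ is faithful (nonzero), this is only possible if $(i+j)\bmod m\in I$. Hence $I$ is closed under addition in $C_m$, i.e.\ $m$-saturated. Conversely, if $I$ is a subgroup of $C_m$, then $\bigoplus_{i\in I}Q^{\ot^i}$ is closed under $\bullet$ and contains $1_R$, so it is an $R$-subalgebra, and it is visibly graded by the group $I$. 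For the rank statement: $\operatorname{rk}_R(S_{Q,\varphi,I})=|I|$ because each $Q^{\ot^i}$ has rank one (here I use that $\varphi$ iso $\Rightarrow Q^{\ot^m}\cong R$ $\Rightarrow$ $Q$ is invertible $\Rightarrow$ rank one, localizing if necessary), and $|I|$ divides $|C_m|=m$ by Lagrange.

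For part (2), given an $m$-saturated $I$, it is a cyclic subgroup of $C_m$, say $I=\langle d\rangle$ where $d\mid m$ and $|I|=m/d=:m'$. The idea is that $S_{Q,\varphi,I}$ is itself a radical extension built from the rank-one module $Q':=Q^{\ot^d}$: indeed $S_{Q,\varphi,I}=\bigoplus_{j=0}^{m'-1}Q^{\ot^{jd}}=\bigoplus_{j=0}^{m'-1}(Q')^{\ot^j}$ as $R$-modules, via the canonical isomorphisms $(Q^{\ot^d})^{\ot^j}\cong Q^{\ot^{jd}}$. Under this identification, the multiplication $\bullet$ of $S_{Q,\varphi}$ restricted to $S_{Q,\varphi,I}$ becomes the factorization product associated to the $R$-module homomorphism $\varphi'\colon (Q')^{\ot^{m'}}\cong Q^{\ot^{m'd}}=Q^{\ot^m}\xrightarrow{\varphi} R$, which is again an isomorphism. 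So $S_{Q,\varphi,I}\simeq S_{Q',\varphi'}$ as $R$-algebras, with $Q'$ f.g.p.\ of rank one.

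The main obstacle is the bookkeeping in part (2): one must check that the associativity/recursion defining $\tilde\varphi$ for $S_{Q,\varphi}$ restricts correctly to give exactly the recursion defining $\tilde{\varphi'}$ for $S_{Q',\varphi'}$ — i.e.\ that the canonical shuffle isomorphisms $(Q^{\ot^d})^{\ot^j}\cong Q^{\ot^{jd}}$ are compatible with both products. This is where the hypothesis that $\varphi$ is an isomorphism is essential (otherwise $Q^{\ot^m}\to R$ being merely a module map would not let one re-index the grading), and it is a diagram-chase that I would spell out on generators $a_1\ot\cdots\ot a_{jd}$, reducing modulo $m$ on one side and modulo $m'$ on the other. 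Everything else (faithfulness of the $Q^{\ot^i}$, rank-one-ness, Lagrange) is immediate from the results already proved in the excerpt.
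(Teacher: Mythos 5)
Your proposal is correct and follows essentially the same route as the paper: the forward direction of (1) via the strong grading identity $Q^{\ot^i}\bullet Q^{\ot^j}=Q^{\ot^{i+_m j}}$, and (2) by taking a generator of $I$, setting $Q'=Q^{\ot^{\text{gen}}}$, and observing that $S_{Q,\varphi,I}$ is canonically the radical extension $S_{Q',\varphi'}$. You simply supply more of the bookkeeping (the converse of (1), the Lagrange rank count, the shuffle-compatibility check) than the paper, which states these steps as clear, and your choice of a generator $d\mid m$ makes $\varphi'$ literally $\varphi$ under the canonical identification rather than an iterate of it.
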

\proof 1)  Part ($\Leftarrow$) is clear. Now to prove ($\Rightarrow$), suppose that $S_{Q, \varphi, I}$ is a commutative $R$-subalgebra of $S_{Q, \varphi}.$ Let $i,j\in I,$  to show that $i+_m j\in I$ it is enough to show that $Q^{{\otimes} ^{i+_mj}}\subseteq S_{Q, \varphi, I},$ where $+_m$ denotes the addition in $C_m.$ Since $\varphi$ is an isomorphism  then $S_{Q, \varphi}$ is a strongly graded $R$-algebra, and thus $Q^{{\otimes} ^{i+_mj}}=Q^{{\otimes} ^i}\bullet Q^{{\otimes} ^j}\subseteq S_{Q, \varphi, I} ,$ as desired.    
%
%
\endproof

2) Take $n\in \{0,\cdots, m-1 \} $ such that $I=\langle n \rangle$ and write $Q'=Q^{\ot ^n},$ then ${\rm rk}_R(Q')=1.$  Now consider the  $R$-module homomorphism $\varphi: Q^{\ot ^m}\to R,$ then there is a $R$-module homomorphism $\varphi': Q'^{\ot ^{m'}}\to R$ where $m'=\frac {m}{{\rm gcd}\{m,n\}}$ is the cardinality of $I.$ Then the isomorphism $S_{Q, \varphi, I}\simeq S_{Q', \varphi'}.$  is given by the identity. 
\endproof
Consider the $I$-radical extension $S_{Q, \varphi, I}$  of $R$  and define an action on $S_{Q, \varphi, I}$ as follows.
\begin{equation*}\label{gactionI}\mu_g(x)=\chi^i(g)x,\text{ for all }g\in G,\, x\in Q^{{\otimes} ^i},\,\, i\in I.\end{equation*}

Then by Proposition \ref{iradd} we get.
\begin{cor} Let $I\subseteq \{0,\cdots, m-1 \}$ be $m$-saturated. Then there is a radical extension $S_{Q', \varphi'}$ of $R$ such that $S_{Q, \varphi, I}$ and $S_{Q', \varphi'}$ are $G$-isomorphic.   In particular, the ring of invariants of $S_{Q, \varphi, I}$ is $R.$
\end{cor}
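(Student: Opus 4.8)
The plan is to obtain the corollary as a direct combination of Proposition \ref{iradd} with the definition of the action $\mu$. First I would invoke part (2) of Proposition \ref{iradd}: since $I\subseteq\{0,\dots,m-1\}$ is $m$-saturated, there is an f.g.p.\ $R$-module $Q'$ of rank one and an $R$-algebra isomorphism $\theta\colon S_{Q,\varphi,I}\to S_{Q',\varphi'}$; inspecting the proof of Proposition \ref{iradd}(2), this $\theta$ is built from the identification $Q'=Q^{\ot^n}$ with $I=\langle n\rangle$, so it sends the homogeneous component $Q^{\ot^{jn}}$ of $S_{Q,\varphi,I}$ onto the homogeneous component $Q'^{\ot^{j}}$ of $S_{Q',\varphi'}$ for $0\le j<m'$. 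The radical extension $S_{Q',\varphi'}$ carries its own $G$-action $\mu'_g(y)=\chi'^{\,j}(g)y$ for $y\in Q'^{\ot^j}$, where $\chi'$ is the character obtained from $\chi$ on the $C_{m'}$-grading; concretely one should take $\chi'=\chi^n$ so that the two gradings match up.

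The next step is to check that $\theta$ intertwines $\mu$ and $\mu'$. On the component $Q^{\ot^{jn}}$ the action $\mu_g$ is multiplication by $\chi^{jn}(g)=(\chi^n)^j(g)=\chi'^{\,j}(g)$, which is exactly the scalar by which $\mu'_g$ acts on $\theta(Q^{\ot^{jn}})=Q'^{\ot^j}$; hence $\theta\circ\mu_g=\mu'_g\circ\theta$ on each homogeneous piece, and by $R$-linearity on all of $S_{Q,\varphi,I}$. Since there are no nontrivial ideals $S_g$ to match (the action here is global, each $\mu_g$ being a genuine automorphism of the whole algebra), this together with $\theta(S_{Q,\varphi,I})=S_{Q',\varphi'}$ gives a $G$-isomorphism in the sense of Definition \ref{equiva}. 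That establishes the first assertion.

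For the statement about invariants, I would then transport the computation along $\theta$: it suffices to compute the ring of invariants of the radical extension $S_{Q',\varphi'}$ under $\mu'$. An element $y=\sum_{j=0}^{m'-1}y_j$ with $y_j\in Q'^{\ot^j}$ is fixed by all $\mu'_g$ iff $\chi'^{\,j}(g)\,y_j=y_j$ for every $g\in G$ and every $j$. For $j=0$ this is automatic, giving the summand $R$. For $1\le j\le m'-1$, pick $g$ with $\chi'(g)=\omega$ a primitive $n'$-th root of unity of the appropriate order; since $R$ is $n$-kummerian, $1-\chi'^{\,j}(g)=1-\omega^{j}$ is a unit of $R$ (here one uses that $Q'^{\ot^j}$ is an $R$-module and $\chi'^{\,j}(g)-1$ acts invertibly, so $y_j=0$). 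Hence the only invariants lie in the degree-zero component, i.e.\ $(S_{Q',\varphi'})^{G}=R$, and therefore $(S_{Q,\varphi,I})^{G}\cong R$ via $\theta$, as claimed.

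The main obstacle I anticipate is the bookkeeping in the previous paragraph, namely making precise which character $\chi'$ one equips $S_{Q',\varphi'}$ with so that $\theta$ is genuinely equivariant, and correspondingly checking that the relevant $1-\omega^{j}$ really are units — this needs the full strength of the $n$-kummerian hypothesis on $R$ (Remark \ref{K}) and a careful matching of the order of $\chi'$ with $m'=m/\gcd(m,n)$. Everything else is formal: $G$-equivariance reduces to a scalar identity on homogeneous components, and the invariant computation is the standard eigenspace argument available over a kummerian base.
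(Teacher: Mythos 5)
Your proposal is correct and follows essentially the route the paper intends: the corollary is obtained directly from part (2) of Proposition \ref{iradd} (the identity identification $Q'=Q^{\ot^n}$ for $I=\langle n\rangle$), with $G$-equivariance checked componentwise via $\chi'=\chi^n$ and the invariants computed by the standard eigenspace argument using that $1-\om^k\in\mathcal{U}(R)$ for the $n$-kummerian ring $R$. Your flagged bookkeeping (matching the order of $\chi$ with $m$, resp.\ of $\chi'$ with $m'$) is exactly the implicit hypothesis of the paper's setting, so nothing essential is missing.
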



%

\subsection{On Partial Cyclic Kummer Extensions}

 It is observed in  \cite[Section 5.2]{BCMP} that  the study of partial Galois extensions of finite abelian groups  can be reduced to the cyclic case, then we assume from now on that  $G=\langle g \rangle$ is  a cyclic group of order $n.$ Moreover we also assume that 
 $R$ is an $n$-kummerian ring for some $n\geq 2$ and 
$\om\in R$ verifies  the hypotheses
of Definition \ref{kummer}. Further we fix  $\chi\in \hat G$ with $\chi(g)=\om$  a generating element. 

Since $\chi$ has order $n$, Proposition \ref{8}  implies that $[Q_{\chi}]\in {\bf Pic}_n(R),$  where ${\bf Pic}_n(R)$ denotes the subgroup of elements of ${\bf Pic}(R)$ whose order divides $n.$  Take a  $R$-module isomorphism  $\varphi: {Q^{{\otimes}^n}_{\chi}}\to R.$ Then, according to  Section \ref{Brad}, we construct  the radical extension  $S_{Q, \varphi, \chi}.$

By   \cite[Theorem 1]{B}, Section 8 we have that  $S_{Q, \varphi, \chi}\supseteq R$ is a (global) cyclic Kummer extension of $R$ with  Galois group $G,$ where the action  is defined by \begin{equation*}\label{gaction}\mu_g(x)=\chi^i(g)x,\text{ for all }g\in G,\, x\in Q^{{\otimes} ^i},\,\, 0\leq i\leq n-1.\end{equation*} 
Moreover, by  \cite[Theorem 2, Section 8]{B} every cyclic Kummer extension of $R$ with  Galois group $G,$ is $G$-isomorphic to a radical extension of $R.$
Thus, it is natural to ask which partial kummer extensions  are equivalent to  either a radical or a $I$-radical extension of $R.$  In view of  \eqref{suma} a  necessary condition for this is that  $S=\ds\bigoplus\limits_{i\in X} Q_{\chi^i_p},$ for some $X\subseteq \{0,\cdots, n-1 \}.$

\begin{prop}\label{isaX} Let $X\subseteq \{0,\cdots, n-1 \}$ such that $S=\ds\bigoplus\limits_{i\in X} Q_{\chi^i_p},$ then $S$ is an $R$-epimorphic image of   the  extension $S_{Q_{\chi_p}, \varphi}.$ In particular, there is a $R$-module isomorphism between $S$ and  $S_{Q_{\chi_p}, \varphi, X}=\ds\bigoplus\limits_{i\in X}Q_{\chi_p}^{{\otimes} ^i} .$ 

\end{prop}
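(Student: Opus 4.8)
The plan is to build the $R$-module isomorphism $S_{Q_{\chi_p},\varphi}\to S$ by exploiting the multiplicative structure already established in Section \ref{IMCC}, namely $Q_{\chi_p^i}Q_{\chi_p^j}=Q_{\chi_p^{i+j}}$ and the tensor isomorphism $Q_{\chi_p^i}\otimes_R Q_{\chi_p^j}\xrightarrow{\sim} Q_{\chi_p^{i+j}}$ from Theorem \ref{iso}. First I would define an $R$-module homomorphism $\Phi\colon S_{Q_{\chi_p},\varphi}=\bigoplus_{i=0}^{n-1}Q_{\chi_p}^{\otimes i}\to S$ on each homogeneous component $Q_{\chi_p}^{\otimes i}$ as the composite $Q_{\chi_p}^{\otimes i}\xrightarrow{\sim}Q_{\chi_p^i}\hookrightarrow S$, where the first map is the iterated multiplication isomorphism coming from Theorem \ref{iso} (applied $i-1$ times) and the inclusion is just $Q_{\chi_p^i}\subseteq S$. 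Since $S=\sum_{\chi\in\hat G}Q_{\chi_p}=\sum_{i=0}^{n-1}Q_{\chi_p^i}$ by \eqref{suma} (using that $\hat G=\langle\chi\rangle$ has order $n$), the map $\Phi$ is surjective, which gives the ``$R$-epimorphic image'' claim. The only delicate point here is checking that $\Phi$ is multiplicative with respect to the $\bullet$-product defined via factorization by $\varphi$: on $Q_{\chi_p}^{\otimes i}\otimes Q_{\chi_p}^{\otimes j}$ with $i+j\le n-1$ multiplicativity is immediate from Theorem \ref{iso}, and for $i+j\ge n$ one must verify that the factorization step $\varphi\colon Q_{\chi_p}^{\otimes n}\to R$ is compatible with the relation $Q_{\chi_p^n}=Q_{\chi_p^0}=R$ inside $S$ (this is where the hypothesis that $R$ is $n$-kummerian and $\chi$ has order $n$ enters, via $Q_{\chi_p^n}=Q_{(\chi^n)_p}=Q_{e_p}=R$ from Proposition \ref{cond}).

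For the ``in particular'' statement, I would restrict $\Phi$ to the submodule $S_{Q_{\chi_p},\varphi,X}=\bigoplus_{i\in X}Q_{\chi_p}^{\otimes i}$. Its image is $\sum_{i\in X}Q_{\chi_p^i}=\bigoplus_{i\in X}Q_{\chi_p^i}=S$ by the hypothesis of the proposition, so $\Phi|_{S_{Q_{\chi_p},\varphi,X}}$ is still surjective. For injectivity I would argue component by component: each $Q_{\chi_p^i}$ is faithfully projective of rank one (Corollary \ref{faithful} and Remark \ref{r1}), hence so is $Q_{\chi_p}^{\otimes i}$ by Proposition \ref{8}, and the multiplication map $Q_{\chi_p}^{\otimes i}\to Q_{\chi_p^i}$ is a surjection between rank-one faithfully projective $R$-modules, therefore an isomorphism. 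Since the target decomposition $S=\bigoplus_{i\in X}Q_{\chi_p^i}$ is a direct sum by hypothesis, $\Phi|_{S_{Q_{\chi_p},\varphi,X}}$ is injective, hence an $R$-module isomorphism; note this restricted map need not respect the algebra structure unless $X$ is $n$-saturated, which is consistent with the statement only claiming an $R$-module isomorphism.

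The main obstacle I anticipate is the verification that $\Phi$ is a ring homomorphism (not merely $R$-linear) on all of $S_{Q_{\chi_p},\varphi}$ — specifically, reconciling the recursively-defined factorization map $\tilde\varphi$ with the ambient multiplication in $S$. Concretely, one needs the diagram relating $\varphi\colon Q_{\chi_p}^{\otimes n}\to R$ and the multiplication $Q_{\chi_p}^{\otimes n}\to Q_{\chi_p^n}=R\subseteq S$ to commute up to the chosen identification; since both are $R$-module isomorphisms $Q_{\chi_p}^{\otimes n}\to R$, they differ by a unit of $R$, and one must either absorb this unit into the choice of $\varphi$ or track it through the construction. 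Once that compatibility is pinned down, the rest is a routine check that the graded pieces match and that $\Phi$ sends $1\mapsto 1$. I would also remark that the induced $G$-action $\mu_g$ on $S_{Q_{\chi_p},\varphi}$ corresponds under $\Phi$ to the partial action $\alpha$ on $S$ restricted to the relevant components, which sets up the comparison with $I$-radical extensions pursued in the remainder of the subsection.
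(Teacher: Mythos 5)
Your proposal is correct and follows essentially the same route as the paper: the componentwise multiplication maps $Q_{\chi_p}^{\otimes i}\to Q_{\chi^i_p}$ supplied by Theorem \ref{iso}, summed into an $R$-module epimorphism onto $S$, with the directness of $S=\bigoplus_{i\in X}Q_{\chi^i_p}$ yielding the isomorphism on $\bigoplus_{i\in X}Q_{\chi_p}^{\otimes i}$. Your worry about multiplicativity of $\Phi$ is not needed at this stage — the statement (and the paper's map, which is taken to be zero on the components outside $X$) is purely $R$-linear, and the algebra-level compatibility is exactly what is treated afterwards in Proposition \ref{isat} under the saturation hypothesis.
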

\proof     Consider the radical extension $S_{Q_{\chi_p}, \varphi}=\ds\bigoplus\limits_{i=0}^{n-1}Q_{\chi_p}^{{\otimes} ^i}.$ 
 By  Theorem \ref{iso} the map $\lambda^i\colon   Q_{\chi_p}^{{\otimes} ^i}\to Q_{\chi^i_p}$ given by 
$$a_1\otimes \cdots  \otimes a_i \to a_1\cdots a_i , \,\,\text{for all}\,\, i\in \{0,\cdots, n-1 \}$$ 
  is a well defined $R$-module isomorphism. Now let $\lambda\colon \ds\bigoplus\limits_{i=0}^{n-1}Q_{\chi_p}^{{\otimes} ^i} \to S $ be defined by $\lambda=\ds\sum_{i=0}^{n-1}\tilde\lambda^i,$ where  $\tilde\lambda^i=\lambda^i$ if $i\in X$ and zero otherwise, then $\lambda$ is a $R$-module epimorphism. The fact that $S=\ds\bigoplus\limits_{i\in X} Q_{\chi^i_p}$ is a direct sum implies that the kernel of $\lambda$ is $\ds\bigoplus\limits_{i\notin X}Q_{\chi_p}^{{\otimes} ^i}$ and thus $S$ and $S_{Q_{\chi_p}, \varphi, X}$ are isomorphic as $R$-modules, and the isomorphism is given by $\lambda_X=\ds\sum_{i\in X}\tilde\lambda^i.$
\endproof
Now we are interested in knowing if $S$ and  $S_{Q_{\chi_p}, \varphi, X}=\ds\bigoplus\limits_{i\in X}Q_{\chi_p}^{{\otimes} ^i} $  are isomorphic as $R$-algebras, but for this question to make sense we have to require, according to Proposition \ref{iradd},   that $X$ has to be  a $n$-saturated set.

  We have the following.

\begin{prop}\label{isat} Suppose that $S=\ds\bigoplus\limits_{i\in I} Q_{\chi^i_p},$ where $I$ is  a $n$-saturated set, then  $\lambda_I$ gives a $I$-graded $R$-algebra isomorphism between  $S$ and   $S_{Q_{\chi_p}, \varphi, I}$ . Conversely, if $\lambda_I$ is a  $R$-algebra isomorphism, then  $S=\ds\bigoplus\limits_{i\in I} Q_{\chi^i_p}.$
\end{prop}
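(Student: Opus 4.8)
The plan is to prove the two implications separately, leveraging the $R$-module isomorphism $\lambda_I$ constructed in Proposition \ref{isaX} and the algebra structure of the radical extension described in Section \ref{Brad}.

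\medskip

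\textbf{Forward direction.} Assume $S=\bigoplus_{i\in I}Q_{\chi^i_p}$ with $I$ an $n$-saturated set. By Proposition \ref{isaX}, $\lambda_I=\sum_{i\in I}\tilde\lambda^i\colon S_{Q_{\chi_p},\varphi,I}\to S$ is an $R$-module isomorphism, where each $\lambda^i\colon Q_{\chi_p}^{\otimes^i}\to Q_{\chi^i_p}$ is the multiplication isomorphism from Theorem \ref{iso}. Since $I$ is $n$-saturated, Proposition \ref{iradd}(1) guarantees that $S_{Q_{\chi_p},\varphi,I}$ is a commutative $R$-subalgebra of $S_{Q_{\chi_p},\varphi}$, graded by $I\subseteq C_n$, with $Q_{\chi_p}^{\otimes^i}\bullet Q_{\chi_p}^{\otimes^j}=Q_{\chi_p}^{\otimes^{i+_n j}}$. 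On the $S$ side, the decomposition $S=\bigoplus_{i\in I}Q_{\chi^i_p}$ is itself a grading: by Proposition \ref{6}, $Q_{\chi^i_p}Q_{\chi^j_p}=Q_{\chi^{i+j}_p}=Q_{\chi^{i+_nj}_p}$ (using $\chi^n=e_p$ so that $\chi^{i+j}_p$ depends only on $i+j \bmod n$), so $S$ is $I$-graded as a ring. To check that $\lambda_I$ is a ring homomorphism it suffices, by $R$-bilinearity and the grading, to verify it on homogeneous tensors: for $a_1\otimes\cdots\otimes a_i\in Q_{\chi_p}^{\otimes^i}$ and $b_1\otimes\cdots\otimes b_j\in Q_{\chi_p}^{\otimes^j}$ with $i,j\in I$, one computes $\lambda_I\big((a_1\otimes\cdots\otimes a_i)\bullet(b_1\otimes\cdots\otimes b_j)\big)$ using the definition of $\bullet$ via $\tilde\varphi$, and compares it to $\lambda^i(a_1\otimes\cdots\otimes a_i)\cdot\lambda^j(b_1\otimes\cdots\otimes b_j)=a_1\cdots a_i\, b_1\cdots b_j$ in $S$. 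When $i+j\le n-1$ this is immediate; when $i+j\ge n$ one must track the factorization by $\varphi$ and use that $\varphi$ is compatible with the multiplication isomorphisms $\lambda^\bullet$ — concretely, that $\lambda^{i+_nj}\circ(\text{factorization})=\mu_S\circ(\lambda^i\otimes\lambda^j)$ where $\mu_S$ is multiplication in $S$, which follows from the fact that the composite $Q_{\chi_p}^{\otimes^n}\xrightarrow{\lambda^n}Q_{\chi^n_p}=Q_{e_p}=R$ agrees with $\varphi$ up to the chosen identification. Since $\lambda_I$ is also unital (it is the identity on $Q_{\chi_p}^{\otimes^0}=R$), it is an $I$-graded $R$-algebra isomorphism.

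\medskip

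\textbf{Converse direction.} Suppose $\lambda_I\colon S_{Q_{\chi_p},\varphi,I}\to S$ is an $R$-algebra isomorphism. Then $\lambda_I$ transports the $I$-grading of $S_{Q_{\chi_p},\varphi,I}$ (valid by Proposition \ref{iradd}(1), since the domain being an algebra forces $I$ to be $n$-saturated) to a grading of $S$; the homogeneous component of degree $i\in I$ is $\lambda_I(Q_{\chi_p}^{\otimes^i})=\lambda^i(Q_{\chi_p}^{\otimes^i})=Q_{\chi^i_p}$ by Theorem \ref{iso}. Hence $S=\bigoplus_{i\in I}\lambda^i(Q_{\chi_p}^{\otimes^i})=\bigoplus_{i\in I}Q_{\chi^i_p}$ as required.

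\medskip

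\textbf{Main obstacle.} The delicate point is the homogeneous-tensor verification in the forward direction when $i+j\ge n$: one must show that the factorization-by-$\varphi$ recursion defining $\bullet$ on $S_{Q_{\chi_p},\varphi}$ is intertwined by the family $\{\lambda^i\}$ with ordinary multiplication in $S$, i.e. that $\varphi$ itself corresponds under $\lambda^n$ to the "wrap-around" multiplication $Q_{\chi^i_p}Q_{\chi^j_p}\to Q_{\chi^{i+j-n}_p}$ provided by Proposition \ref{6} together with the identification $Q_{\chi^n_p}=R$. This is essentially a compatibility statement about the chosen isomorphism $\varphi$; since $\varphi$ is only required to exist (any $R$-module isomorphism $Q_{\chi_p}^{\otimes^n}\to R$), one works with the one induced by the multiplication map, or notes that changing $\varphi$ only changes $S_{Q_{\chi_p},\varphi}$ by a $G$-isomorphism, so the statement is independent of the choice. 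Everything else is bookkeeping with the gradings and the already-established module isomorphism $\lambda_I$.
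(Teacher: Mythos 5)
Your proposal is correct and takes essentially the paper's route: both proofs reduce to checking multiplicativity of $\lambda_I$ (equivalently of $\lambda_I^{-1}$) on homogeneous elements, splitting into the cases $i+j<n$ and $i+j\ge n$, where the wrap-around case works because $\varphi$ is taken to be the multiplication isomorphism $Q_{\chi_p}^{\otimes^n}\to Q_{\chi^n_p}=Q_{e_p}=R$ of Theorem \ref{iso} (the choice the paper uses implicitly), and both treat the converse as an immediate transport of the decomposition through the bijection $\lambda_I=\sum_{i\in I}\lambda^i$. Only your fallback aside that the claim is independent of the choice of $\varphi$ is shaky (different $\varphi$ need not give $G$-isomorphic radical extensions, and the proposition is about the specific map $\lambda_I$), but it is unnecessary since your primary choice of $\varphi$ already settles the point.
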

\proof
 We shall check that the map $\lambda_I\m$ preserves products, where $\lambda_I$ is as in the proof of Proposition \ref{isaX}. Let $x=a_1\cdots a_i\in Q_{\chi^i_p}$ and $y=a_{i+1}\cdots a_{i+j}\in Q_{\chi^j_p},$ with $i,j\in I$ and  $a_l\in Q_{\chi_p},$ for all $l\in\{1,\cdots, i+j\}.$ We consider two cases

{\bf Case 1} $i+j < n.$ In this case $\lambda_I\m(xy)=a_1\otimes \cdots  \otimes a_i\otimes a_{i+1}\cdots a_{i+j}=\lambda\m(x)\bullet \lambda\m(y)$

{\bf Case 2} $i+j \geq n.$ Here 
\begin{align*}\lambda_I\m(xy)&=\lambda_I\m(\underbrace{a_1\cdots a_i\cdots a_n}_{\in R} a_{n+1}\cdots a_{i+j})\\
&=a_1\cdots a_i\cdots a_n\lambda_I\m(a_{n+1}\cdots a_{i+j})\\
&=a_1\cdots a_i\cdots a_n a_{n+1}\otimes \cdots \otimes a_{i+j}\\
&=(a_1\otimes \cdots \otimes a_i)\bullet (a_{i+1}\otimes \cdots  \otimes a_{i+j})\\
&=\lambda_I\m(x)\bullet \lambda_I\m(y),
\end{align*}
as desired.  The converse is clear.
\endproof
%
%

Let $H$ be a subgroup of $G$ then then $H$ acts partially on $S$ with partial action $\af_H=\{\af_h: S_{h\m}\to S_h\}_{h\in H}$ and $S\supseteq S^{\af_H}$ is a partial Galois extension.

Notice that in general $R \subseteq S^{\alpha_H}$ for any subgroup $H$ of $G.$ The following fact shows that the equality holds exactly when $\alpha$ is an extension by zero of $\alpha_H.$ (see Example \ref{ext0}).
\begin{prop}\label{casiglob} Let $S\supseteq R$ be a partial Galois extension and $H$ a subgroup of $G$  acting globally on $S$ with action $\beta.$ Then $R=S^H,$ if and only if, $\alpha$ is extension by zero of $\beta.$
\end{prop}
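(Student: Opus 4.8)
The plan is to prove the equivalence by unpacking both conditions and using the characterization of partial Galois extensions together with the structure of the extension-by-zero construction. Recall that $\alpha$ being the extension by zero of $\beta$ (Example \ref{ext0}) means precisely that $S_g = S$ for $g \in H$ with $\alpha_g = \beta_g$, and $S_g = \{0\}$ (equivalently $1_g = 0$) for $g \in G \setminus H$. So the statement amounts to: $R = S^H$ if and only if $1_g = 0$ for every $g \notin H$.

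For the direction ($\Leftarrow$), assume $\alpha$ is the extension by zero of $\beta$. Then for $a \in S$ the subinvariance condition $\alpha_g(a1_{g^{-1}}) = a1_g$ is automatic (both sides are $0$) for $g \notin H$, while for $g \in H$ it reads $\beta_g(a) = a$. Hence $S^\alpha = S^H$, and since $S \supseteq R$ is a partial Galois extension we have $R = S^\alpha = S^H$. For the converse ($\Rightarrow$), suppose $R = S^H$. The key observation is that $R = S^H$ forces $H$ to act "Galois-ly" with the same coordinate flavour, so I would play off the two sets of Galois coordinates against each other. Concretely: since $S \supseteq R$ is a partial Galois extension of $G$, equation \eqref{galequiv} holds; specializing to $g, h \in H$ gives partial-Galois-type relations, but since $\beta$ is global these become genuine global Galois coordinates showing $S \supseteq S^H$ is a (global) $H$-Galois extension. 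On the other hand, the hypothesis $R = S^H$ lets us compare ranks: by \cite[Corollary 4.6]{DFP} (or the analogous rank count used in the proof of Lemma \ref{summa}(3)), $\operatorname{rk}_R(S) = |G| = n$, while the global $H$-Galois extension $S \supseteq S^H = R$ forces $\operatorname{rk}_R(S) = |H|$. Therefore $|H| = |G|$ is impossible unless... — here I must be careful: the right move is rather to take an idempotent argument.

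A cleaner route for ($\Rightarrow$): fix $g_0 \in G \setminus H$ and show $1_{g_0} = 0$. Consider the element $z = 1_{g_0} \in S$. I claim $z \in S^H$: for $h \in H$, since $\beta$ is global and $1_{g_0}$ lies in the ideal $S_{g_0}$, one computes $\beta_h(1_{g_0}) = 1_{hg_0}$ using the standard compatibility $\alpha_h(1_{h^{-1}}1_{g_0}) = 1_h 1_{hg_0}$ (axiom (ii)) together with $1_h = 1_S$; thus $\beta_h(1_{g_0}) = 1_{hg_0}$, and since $hg_0 \notin H$ as well, we would need these to all be equal to conclude $z \in S^H$ — which they need not be. So instead I would use the trace/averaging idea: the genuinely robust argument is that $R = S^H$ together with $R = S^\alpha$ gives $S^{\alpha_H} = S^\alpha$, and then apply the partial Galois theory for the subgroup $H$ — $S \supseteq S^{\alpha_H}$ is partial Galois for $\alpha_H$, which here is $\beta$, hence \emph{global} Galois, so $S \supseteq S^{\alpha_H}$ has $S$ free of rank $|H|$ over $S^{\alpha_H}$; but $S \supseteq S^\alpha = R$ being partial Galois of $G$ gives $\operatorname{rk}_R S = n = |G|$ by \cite[Corollary 4.6]{DFP}. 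Equality $S^{\alpha_H} = S^\alpha$ then forces $|H| = n$. This only handles $H = G$, so the rank argument alone is insufficient; the correct finish is to localize: pass to a connected component (prime $\mathfrak p$ of $R$) where the idempotents $1_g$ become $0$ or $1$, observe that on each component the set $\{g : (1_g)_{\mathfrak p} = 1\}$ is a subgroup $H'$ of $G$ with $\alpha_{\mathfrak p}$ the extension by zero of the $H'$-action, apply the already-proven equivalence fibrewise to get $H'_{\mathfrak p} \subseteq$ the fixing subgroup, and conclude $H' \subseteq H$ on every component; then $1_g = 0$ globally for $g \notin H$ since it is $0$ on every fibre.

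The main obstacle I anticipate is the ($\Rightarrow$) direction: extracting from the purely ring-theoretic equality $R = S^H$ the idempotent-vanishing conclusion $1_g = 0$ for $g \notin H$. The rank count via \cite[Corollary 4.6]{DFP} is suggestive but too coarse because ranks of $S$ over $R$ and over $S^{\alpha_H}$ need a component-by-component comparison (ranks may be locally constant but the relevant subgroup can vary over $\operatorname{Spec} R$). I expect the clean argument uses the globalization $(T,\beta')$ from Remark \ref{isogal}: in $T$, the identity $1_S = \sum_{g} $ (a sum of orthogonal idempotents indexed by cosets, coming from $T = \bigoplus \beta'_g(1_S) T$), and $S^{\alpha_H} = R$ translates, via the isomorphism $\psi$, into the $T$-side statement that the global $H$-invariants of a suitable sub-object coincide with $T^G$, which by classical Galois descent (\cite{CHR}) pins down exactly which idempotents survive — forcing $1_g = 0$ off $H$. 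So the real work is setting up this globalization bookkeeping correctly; once that is in place, both implications are short.
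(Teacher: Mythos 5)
Your $(\Leftarrow)$ direction is fine and matches the paper. The $(\Rightarrow)$ direction, however, has a genuine gap — in fact it is a sequence of abandoned attempts rather than a proof. First, the rank count you propose is based on a false premise: for a \emph{partial} Galois extension ${\rm rk}_R(S)$ is not $|G|$ in general; \cite[Corollary 4.6]{DFP} says essentially the opposite, namely that ${\rm rk}_R(S)=|G|$ characterizes the \emph{global} case (this is exactly how the paper uses it in Lemma \ref{summa}(3) and Theorem \ref{partoglob}), so the comparison ``$|H|$ versus $n$'' you set up proves nothing and you rightly abandon it. Second, your localization fix is also flawed as stated: localizing at a prime $\mathfrak p$ of $R$ does not force the idempotents $1_g$ to become $0$ or $1$ (e.g.\ $S=R\times R$), so the set $\{g:(1_g)_{\mathfrak p}=1\}$ need not be defined, let alone be a subgroup, and the ``apply the equivalence fibrewise'' step is not available. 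Finally, the globalization route via Remark \ref{isogal} and \cite{CHR} is only announced, with the bookkeeping explicitly left undone, so nothing in the proposal actually extracts $S_g=0$ for $g\notin H$ from $R=S^H$.

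The missing idea is the one the paper uses: apply \cite[Theorem 4.1(iv)]{DFP} \emph{twice}. Since $S\supseteq R$ is a $G$-partial Galois extension, $S\otimes S\simeq \prod_{g\in G}S_g$ as $S$-modules; and since the restriction $\alpha_H=\beta$ is global with $S^{\alpha_H}=S^H=R$, the same theorem applied to the $H$-extension gives $S\otimes S\simeq \prod_{h\in H}S_h$ (all $S_h=S$). Localizing at any prime $\mathfrak p$ and comparing ranks of the two decompositions yields $\sum_{h\in H}{\rm rk}((S_h)_{\mathfrak p})+\sum_{g\notin H}{\rm rk}((S_g)_{\mathfrak p})=\sum_{h\in H}{\rm rk}((S_h)_{\mathfrak p})$, so $(S_g)_{\mathfrak p}=0$ for every $g\notin H$ and every $\mathfrak p$, hence $S_g=0$, i.e.\ $\alpha$ is the extension by zero of $\beta$. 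Your instinct to ``play the two Galois structures off against each other'' and to localize was pointing in the right direction, but the correct object to compare is $S\otimes S$ with its two product decompositions, not ${\rm rk}_R(S)$ itself.
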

\begin{proof} It is clear that if $\alpha$ is extension by zero of $\beta,$ then $R=S^H.$ Conversely suppose that $R=S^H,$ then by \cite[iv) Theorem 4.1]{DFP} there are $S$-module isomorphisms 
$\prod_{g\in G}S_g\simeq S\otimes S\simeq \prod_{h\in H}S_h$. We shall show that $S_g={0}$ for all $g\in G\setminus H.$ For this, let $\mathfrak{p}$ be a prime ideal of $S,$ then
$$\sum_{h \in H}{\rm rk}_{R_\mathfrak{p}}((S_h)_\mathfrak{p})+\sum_{g \in G\setminus H}{\rm rk}_{R_\mathfrak{p}}((S_g)_{\mathfrak{p}})=\sum_{h \in H}{\rm rk}_{R_\mathfrak{p}}((S_h)_\mathfrak{p}).$$ Thus,  for $g \in G\setminus H$  we have that $(S_g)_{\mathfrak{p}}=0$ which implies $S_g=0,$ as desired.
\end{proof}

Now we give the main result of this work.
\begin{thm}\label{partoglob} Let $S\supseteq R$ be a partial n-kummerian extension, then there is a $n$-saturated set $I$ of $\{1,\cdots, n\}$ such that $S=\ds\bigoplus\limits_{i\in I} Q_{\chi^i_p},$ if and only if,  there is a subgroup $H$ of $G$ of order m such that $S\supseteq R$ is a global m-kummerian extension with Galois group $H$ and global action $\af_H.$  In this case $\alpha$ is the extension by zero of $\af_H.$
\end{thm}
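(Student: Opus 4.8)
The plan is to prove the two implications separately, exploiting the structure theory already built up in Lemma \ref{summa} and Proposition \ref{isat}, together with the dictionary between extensions by zero and the condition $R=S^{\af_H}$ from Proposition \ref{casiglob}.

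\textbf{($\Leftarrow$)} Assume there is a subgroup $H\leq G$ of order $m$ with $\af$ the extension by zero of a global $m$-kummerian action $\af_H$ and $R=S^{\af_H}$. Since $\af$ is extension by zero, $1_g=0$ for all $g\in G\setminus H$ and $1_g=1_S$ for all $g\in H$. I would first identify which characters $\chi^i_p$ are nontrivial: by the description of $\ker\mu_p$ in Lemma \ref{summa}(2a), $\chi^i_p$ and $\chi^j_p$ coincide precisely when $\chi^{i-j}$ is trivial on the ``support'' $H$, i.e. when $\chi^i$ and $\chi^j$ restrict to the same character of $H$. Writing $n=m\ell$, the restriction map $\hat G\to\hat H$ has kernel of order $\ell$, and $\chi$ generates $\hat G$, so $\chi^i|_H=\chi^j|_H$ iff $i\equiv j\pmod m$. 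Hence the distinct modules among $\{Q_{\chi^i_p}\}$ are $Q_{\chi^0_p},\dots,Q_{\chi^{m-1}_p}$, and I can take $I=\{0,1,\dots,m-1\}$, which is visibly $n$-saturated as a subgroup of $C_n$ (it is the image of $C_m$). Then by Lemma \ref{summa}(3), $S=\sum_{i\in I}Q_{\chi^i_p}$; to see the sum is direct I would invoke Lemma \ref{summa}(3) applied to the global extension $S\supseteq S^{\af_H}=R$ with group $H$: restricting $\alpha$ to $H$ gives a global Galois extension, and the characters of $H$ are exactly $\chi^0|_H,\dots,\chi^{m-1}|_H$, so $S=\bigoplus_{i=0}^{m-1}Q_{\chi^i}$ (global version of $Q$), and each $Q_{\chi^i}$ maps onto $Q_{\chi^i_p}$ via $t\mapsto t1_S=t$ by Lemma \ref{summa}(4) — here an isomorphism since $1_S$ acts trivially. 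So $S=\bigoplus_{i\in I}Q_{\chi^i_p}$, as required.

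\textbf{($\Rightarrow$)} Conversely, suppose $S=\bigoplus_{i\in I}Q_{\chi^i_p}$ for some $n$-saturated $I$; view $I$ as a subgroup of $C_n$, say $I=\langle k\rangle$ of order $m=n/\gcd(n,k)$. By Proposition \ref{isat}, $\lambda_I$ gives an $I$-graded $R$-algebra isomorphism $S\simeq S_{Q_{\chi_p},\varphi,I}$, and by Proposition \ref{iradd}(2) the latter is $R$-algebra isomorphic to a radical extension $S_{Q',\varphi'}$ with $\mathrm{rk}(Q')=1$ and $m$ homogeneous components. The key point is to transport the $G$-action: I would check that the partial action $\af$ on $S$ corresponds, under $\lambda_I$, to the action $\mu$ defined before the Corollary by $\mu_g(x)=\chi^i(g)x$ on $Q^{\otimes i}$ — this follows because $Q_{\chi^i_p}=\{a\in S\mid \af_g(a1_{g\m})=\chi^i(g)a\}$ by Proposition \ref{cond}. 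Now $S=\bigoplus_{i\in I}Q_{\chi^i_p}$ being a direct sum forces, via Lemma \ref{summa}(3) whose equivalence says directness holds iff $S\supseteq R$ is a \emph{global} Galois extension, that $\af$ is global. A global partial action of $G$ that is the restriction-by-support of its behavior on a subgroup is an extension by zero: the subgroup is $H=\{g\in G\mid 1_g=1_S\}$, and since $\af$ is global and unital, $1_g=1_S$ for $g$ in a subgroup $H$ and $1_g$ is a nontrivial idempotent direct summand otherwise — but for a \emph{connected}/faithful situation the $S_g$ for $g\notin H$ must vanish; more carefully, I would argue that $R=S^\af=S^{\af_H}$ by the definition of subinvariants (the conditions for $g\notin H$ with $1_g\neq 0$ would cut $R$ down, and a rank count as in Proposition \ref{casiglob} shows $1_g=0$). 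Then Proposition \ref{casiglob} gives that $\af$ is the extension by zero of the global action $\beta=\af_H$, and $R=S^H$. Finally $S\supseteq R$ is $m$-kummerian: $G$ abelian gives $H$ abelian of order $m$, $R$ is $n$-kummerian hence $m$-kummerian (if $n\geq 2$ and $R$ has $\om$ of order $n$ with $1-\om^i$ invertible for $1\le i\le n-1$, then $\om^{n/m}$ has order $m$ with the analogous property since $\{n/m, 2n/m,\dots\}\subseteq\{1,\dots,n-1\}$ up to the top), and $S\supseteq S^H$ is Galois with group $H$ by standard global Galois theory (or Remark \ref{isogal} with trivial globalization).

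\textbf{Main obstacle.} The delicate step is the $(\Rightarrow)$ direction's passage from ``the sum $\bigoplus_{i\in I}Q_{\chi^i_p}$ is direct'' to ``$\af$ is an extension by zero'' with the right subgroup $H$ and the correct matching of Galois groups. Lemma \ref{summa}(3) already delivers that directness is equivalent to $\af$ being global, so the real work is: (i) pinning down $H$ as $\{g: 1_g\neq 0\}$ and showing this is a subgroup with $\af_H$ global and $R=S^{\af_H}$, and (ii) checking the degree/rank bookkeeping so that $|H|=m=|I|$ — this should fall out of $\mathrm{rk}_R(S)=|I|$ from the direct sum decomposition (each $Q_{\chi^i_p}$ has rank one by Remark \ref{r1}) compared with $\mathrm{rk}_R(S)=|H|$ from the global Galois extension $S\supseteq S^H$ via \cite[Corollary 4.6]{DFP}. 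Once $H$ is correctly identified, Proposition \ref{casiglob} closes the argument cleanly.
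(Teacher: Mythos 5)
Your argument for ($\Rightarrow$) rests on a misapplication of Lemma \ref{summa}(3). That lemma's directness criterion refers to the sum over \emph{all} of $\hat G$: it says $\sum_{\chi\in\hat G}Q_{\chi_p}$ is direct iff $S\supseteq R$ is a global $G$-Galois extension. From $S=\bigoplus_{i\in I}Q_{\chi^i_p}$ with $|I|=m<n$ you cannot conclude that $\af$ is global; in fact the opposite holds, since the omitted modules $Q_{\chi^j_p}$, $j\notin I$, are nonzero (Corollary \ref{faithful}) and sit inside $S=\sum_{i=0}^{n-1}Q_{\chi^i_p}$, so the full sum is not direct and $\af$ is genuinely partial whenever $m<n$ --- which is exactly why the theorem's conclusion is ``extension by zero,'' not ``global.'' Your subsequent attempt to salvage this by setting $H=\{g\mid 1_g=1_S\}$ starts from this false premise and is never made precise (why is $\af_H$ global, why $S_g=0$ off $H$). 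The paper's route is different and is essentially what your ``main obstacle'' paragraph sketches but your main text does not execute: from $I=\langle i_0\rangle$ one \emph{defines} $H=\langle g^{i_0}\rangle$, shows $S^{\af_H}=R$ because the degree-zero summand $Q_{\chi^0_p}$ is $R$ (Proposition \ref{cond}), and then compares ranks, ${\rm rk}_R S=|I|=m=|H|$ (each $Q_{\chi^i_p}$ has rank one by Remark \ref{r1}), so that \cite[Corollary 4.6]{DFP} forces $\af_H$ to be global; only then does Proposition \ref{casiglob} yield that $\af$ is the extension by zero of $\af_H$.

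In ($\Leftarrow$) there is a second concrete error: you take $I=\{0,1,\dots,m-1\}$ and assert it is ``visibly $n$-saturated.'' It is not: $\{0,\dots,m-1\}$ is a subgroup of $C_n$ only when $m=1$ or $m=n$ (e.g.\ $n=4$, $m=2$ gives $\{0,1\}$, not closed under addition mod $4$). Since $n$-saturation of $I$ is precisely what the statement requires (and what Proposition \ref{iradd}/\ref{isat} need later), this is not cosmetic. The paper instead indexes by the subgroup $I=\langle i_0\rangle$ of $C_n$ corresponding to $H=\langle g^{i_0}\rangle$ and expands $S$ over the characters $\tilde\chi^j=\chi^{ji_0}$; to rewrite your (correct) decomposition over residues mod $m$ in that saturated indexing you must check that $i\mapsto\chi^i|_H$ is injective on $\langle n/m\rangle$, i.e.\ that the residues $jn/m \bmod m$ are pairwise distinct --- a genuine arithmetic condition ($\gcd(m,n/m)=1$) that your choice of representatives hides rather than addresses. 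So both implications need repair: the saturated index set must be chosen as a subgroup of $C_n$, and globality must be obtained for $\af_H$ via the rank count and \cite[Corollary 4.6]{DFP}, not for $\af$ via Lemma \ref{summa}(3).
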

\begin{proof} Suppose that $S=\ds\bigoplus\limits_{i\in I} Q_{\chi^i_p},$ where $I$ is a $n$-saturated set. Let $i_0\in\{1,\cdots, n-1\} $ such that $I=\langle i_0\rangle$  and write $H=\langle g^{i_0}\rangle,$ then  $S=\ds\bigoplus\limits_{i=0}^m Q_{\tilde\chi^i_p},$ where $m$ is the order of $H$ and $\tilde\chi=\chi^{i_0}.$    
then 
$S\supseteq S^{\af_H}$ is a partial Galois extension and by Proposition \ref{cond} we have 
\begin{equation}\label{equall}R=Q_{\chi^0_p}=Q_{\tilde\chi^0_p}=S^{\af_H}.\end{equation} 
Finally,  since ${\rm rk}_RS={\rm rk}_R\left(\ds\bigoplus\limits_{i=0}^m Q_{\tilde\chi^i_p}\right)=m=\mid H\mid$ we get that $H$ acts globally on $S,$ thanks to \cite[Corollary 4.6]{DFP}.

Conversely, suppose that there exists a subgroup $H$ of $G$ such that $S\supseteq R$ is a global $m$-kummerian extension with Galois group $H.$ Write $H=\langle g^{i_0}\rangle,$ then   by 3) of Lemma \ref{summa} we have that $S=\ds\bigoplus\limits_{i=0}^m Q_{\tilde\chi^i_p},$ where $\tilde\chi=\chi^{i_0}.$ Finally  taking  $I=\langle i_0\rangle$ a $n$-saturated set of $\{1,\cdots, n\}$ we obtain $S=\ds\bigoplus\limits_{i\in I} Q_{\chi^i_p}.$ The final assertions follows from Proposition \ref{casiglob} and \eqref{equall}.
\end{proof}

\begin{rem}\label{para} It follows from Proposition \ref{isat} and  Theorem \ref{partoglob} that the study of partial Kummer extensions which are parametrized by $I$-radical extensions can be reduced to the global case.
\end{rem}

\subsection{Some final examples and remarks}

As observed in Remark \ref{para}  there are partial kummerian extension thar are not equivalent to radical extensions. We give  two examples of them.

\begin{exe}\label{e1}\cite[Example 6.1]{DFP}  Let $G=\langle \sigma \mid \sigma^4=1 \rangle ,$ and put $S=\mathbb{C}e_1\oplus \mathbb{C}e_2\oplus \mathbb{C}e_3\oplus \mathbb{C}e_4,$ where $e_1,e_2,e_3$ and $e_4$ are orthogonal idempotents with sum $1_S.$ Then there there is a partial action $\alpha$ of $G$ on $S$ by setting.
$$S_e=S,\,\,\,\, S_g= \mathbb{C}e_1\oplus \mathbb{C}e_2,\,\,\, S_{g^2}= \mathbb{C}e_1\oplus \mathbb{C}e_3\,\,\,\, \text{and}\,\,\,\,S_{g^3}= \mathbb{C}e_2\oplus \mathbb{C}e_3$$  and defining $\af_1={\rm id}_S,$ and 
$$\af_g(e_2)=e_1,\,\,\,\af_g(e_3)=e_2, \,\,\,\af_{g^2}(e_1)=e_3,\,\,\,\af_{g^2}(e_3)=e_1\,\,\,\text{and}\,\,\,
\af_{g^3}=\af\m_g.$$
Then  $\mathbb{C} \simeq \{(z,z,z)\mid z\in \mathbb{C}\}$ is 4-kummerian with $w=i$ and $S/\mathbb{C}$ is a partial $4$-kummerian extension. Moreover  $\hat G={\rm hom}(G, \mathbb{C})$ is generated by $\chi,$ where $\chi(\sigma)=i.$ Then by Proposition  \ref{cond} we have that

\begin{itemize}
\item $Q_{e_g}= \{(r,r,r)\mid r\in R\}=\langle e_1+ e_2+ e_3\rangle;$
\item  $Q_{\chi_p}=\{(r, ir,-r)\mid r\in R\}=\langle e_1+ ie_2 -e_3\rangle;$
\item$Q_{\chi_p^2}=Q_{\chi_p}Q_{\chi_p}=\{(r, -r, r)\mid r\in R\}=\langle e_1-e_2 +e_3\rangle;$
\item $Q_{\chi_p^3}=\{(r, -ir,r)\mid r\in R\}=\langle e_1-ie_2 +e_3\rangle,$

\end{itemize}
and we have that 
\begin{align*}
S&=\sum_{\chi \in \hat G}Q_{\chi_p}=Q_{e_g}\oplus Q_{\chi_p}\oplus Q_{\chi_p^2}=Q_{e_g}\oplus Q_{\chi_p}\oplus Q_{\chi_p^3}=Q_{\chi_p}\oplus Q_{\chi_p^2}\oplus Q_{\chi_p^3}
\end{align*}
Moreover the sum $Q_{e_g}+ Q_{\chi_p^2}+ Q_{\chi_p^3}$ is not direct.

\end{exe}
\begin{exe}\label{e2}\cite[Example 6.2]{DFP} Let  $G=\langle \sigma \mid \sigma^
5=1 \rangle .$ Then  there is a partial action of $G$ on $S=\mathbb{C}e_1\oplus \mathbb{C}e_2\oplus \mathbb{C}e_3\oplus \mathbb{C}e_4\oplus \mathbb{C}e_5,$ where $e_1,e_2,e_3, e_4$ and $e_5$ are orthogonal idempotents with sum $1_S,$ such that $S/R$ is a partial Galois extension, where   $R=\{(r,r,s,s)\mid r,s\in \mathbb{C}\}$ is a 5-kummerian ring with $w$ a fifth primitive root of the unity. Then by Proposition  \ref{cond} we have 

\begin{itemize}
\item $Q_{e_g}= R=\langle e_1+ e_2, e_3 + e_4\rangle;$
\item  $Q_{\chi_p}=\{(r, wr,s, ws)\mid r\in \mathbb{C}\}=\langle e_1+ we_2, e_3 + we_4\rangle;$
\item$Q_{\chi_p^2}=\{(r, w^2r,s, w^2s)\mid r,s\in \mathbb{C}\}=\langle e_1+ w^2e_2, e_3 + w^2e_4\rangle;$
\item $Q_{\chi_p^3}=\{(r, w^3r,s, w^3s)\mid r,s\in \mathbb{C}\}=\langle e_1+ w^3e_2, e_3 + w^3e_4\rangle;
$
\item $Q_{\chi_p^4}=\{(r, w^4r,s, w^4s)\mid r,s\in \mathbb{C}\}=\langle e_1+ w^4e_2, e_3 + w^4e_4\rangle;
$
\end{itemize}
and we have that 
$S=\sum_{\chi \in \hat G}Q_{\chi_p}=Q_{\chi^i_p}\oplus Q_{\chi^j_p},$ for all $1\leq i,j\leq 5, i\neq j.$

\end{exe}
 Inspired by Example \ref{e1} and Example \ref{e2} we finish this work with the following. \\

\noindent {\bf Question:} Let $S\supseteq R$ be a partial $n$-kummerian extension with Galois cyclic  group $G.$ Write $S=\sum\limits_{i=0}^{n-1} Q_{\chi^i_p},$ where $\hat G=\langle \chi \rangle,$ then is there a subset $X$ of  $\{0,1,\cdots, n-1\}$ such that $S=\bigoplus\limits_{i\in X} Q_{\chi^i_p}$?. 

Notice that the answer of the question above is affirmative for all partial kummerian extensions that can be parametrized by radical extensions, but as observed in  Example \ref{e1} and Example \ref{e2} there are others partial kummerian extensions for which the answer is also affirmative. In particular, these partial Galois extensions $S\supseteq R$ are such that ${\rm rk}_R(S)$ is well defined.

\end{document}